\documentclass[UTF8,oneside,10pt]{article}
\usepackage[a4paper,margin=1.3in]{geometry}
\usepackage{amsmath}           % AMS 的主包
\usepackage{amssymb}           % AMS 符号包，会自动加载 amsfonts
\usepackage{mathrsfs}          % \mathscr 字体样式
\usepackage{eucal}             % 更改 \mathcal 的字体样式
\usepackage{amsthm}            % AMS 的定理环境
\usepackage{mathtools}         % 一些额外的数学环境功能，包括 \dfrac
\usepackage{stmaryrd}          % 更更多的特殊符号
\usepackage{arcs}              % 弧線hat
\usepackage{esint}             % 更多积分符号
\usepackage{extarrows}         % 提供在箭头上下写字的命令
\usepackage{enumerate}         % 带序号列表环境
\usepackage{xcolor}            % 让 latex 支持花里胡哨的颜色的包
\usepackage{graphicx}          % 插入各种类型图片支持
\usepackage{tikz}              % tikz 绘图环境
\usepackage{tikz-3dplot}       % 一个简单的 tikz 3d 绘图宏包
\usepackage{tikz-cd}           % 基于 tikz 的交换图表绘制工具
\usepackage[all,cmtip]{xy}     % 交换图表绘制工具，命令为 \xymatrix
\usepackage{pgfplots}          % 高级绘图包，基于 tikz，包括 2d 和 3d 绘图
\pgfplotsset{compat=newest}    % 设置 pgfplots 兼容性选项，以使用新功能
\usepackage{subcaption}        % 可以交叉引用的图表并排环境
\usepackage[ocgcolorlinks,linkcolor=blue]{hyperref}
\usepackage{cleveref}          
\usepackage{tcolorbox}         % 绘制彩色文本框的宏包
\tcbuselibrary{most}           % 加载 tcolorbox 的库
\usepackage{bm}                % 为所有数学字体添加粗体，命令 \bm{abcd}
\usepackage{slashed}           % 在符号上添加反划线，命令 \slashed

\usepackage[utf8]{inputenc}
\usepackage[T1]{fontenc}

\usepackage{svg}               % 插入svg圖

\usepackage{multicol}          %用于实现在同一页中实现不同的分栏

\usepackage{amsmath,amsthm,amsfonts,amssymb,bm}

\usepackage{fourier}

\usepackage{bbold}

\usepackage{graphicx} %插入图片的宏包
\usepackage{float} %设置图片浮动位置的宏包

\theoremstyle{plain}\newtheorem{theorem}{Theorem}[section]
\theoremstyle{definition}\newtheorem{definition}[theorem]{Definition}
\theoremstyle{plain}
\theoremstyle{plain}\newtheorem{corollary}[theorem]{Corollary}
\theoremstyle{plain}\newtheorem{lemma}[theorem]{Lemma}
\theoremstyle{plain}
\theoremstyle{plain}\newtheorem{proposition}[theorem]{Proposition}
\theoremstyle{plain}
\theoremstyle{plain}
\theoremstyle{plain}
\theoremstyle{remark}
\theoremstyle{remark}
\theoremstyle{remark}
\theoremstyle{remark}
\theoremstyle{plain}
\theoremstyle{plain}
\theoremstyle{plain}
\theoremstyle{remark}
\theoremstyle{remark}
\theoremstyle{remark}
\numberwithin{equation}{section}
\numberwithin{theorem}{section}

\usepackage[backend=bibtex]{biblatex}
\usepackage{xcolor}

\definecolor{c1}{HTML}{88d5d2} %淡蓝
\definecolor{c2}{HTML}{9c9d47} %橄榄绿
\definecolor{c3}{HTML}{fec842} %鹅蛋黄
\definecolor{c4}{HTML}{e97a2e} %霞光黄
\definecolor{c5}{HTML}{834e71} %月光紫

\usepackage{xcolor}
\definecolor{myred}{HTML}{a43113}
\definecolor{myblue}{HTML}{1f4887}
\definecolor{mygreen}{HTML}{49804c}
\definecolor{mygraygreen}{HTML}{6b6c5c}
\definecolor{myyellow}{HTML}{d5934a}
\definecolor{mypurple}{HTML}{492f7a}

\begin{document}
% \maketitle

\begin{center}
{\Large \textbf{Minimal Filling $K$-Systems of Curves}}

{\Large \textbf{ }} 

{\large \textsc{Hong} CHANG$^{*}$, \textsc{Xiao} CHEN$^{**}$ and \textsc{Wujie} SHEN$^{***}$}

{\footnotesize \emph{e-mail:} changhong@pku.edu.cn$^{*}$}

{\footnotesize \emph{e-mail:} x-chen20@tsinghua.org.cn$^{**}$}

{\footnotesize \emph{e-mail:} shenwj22@mails.tsinghua.edu.cn$^{***}$}

\end{center}

\begin{abstract}

    In this paper, we determine the exact minimal number of curves in a filling $k$-system on an oriented surface of genus $g$ for any positive integers $k$ and $g$.

\end{abstract}

% \keywords{A; B; C.}

% \newpage
%\tableofcontents
% \newpage

%==============================%
%           正文内容           %
%------------------------------%

% 图片格式：
% \begin{figure}[H]
% \centering
% \includegraphics[width=0.9\textwidth]{Pictures//HalfTwist.jpg}
% \caption{A half twist.}
% \label{fig: HalfTwist}
% \end{figure}

\section{Introduction}\label{section: Introduction}

%\subsection{Filling multicurves}

Let $S_g$ be a closed orientable surface of genus $g$. For a positive integer $k$, a $k$-system of curves is a finite collection of closed curves (we also call it a \emph{multicurve}) with pairwise intersections at most $k$, as introduced in \cite{MR1405702} for the purpose of studying graph embeddings on surfaces. The study of $k$-systems is closely related to that of the curve complex \cite{harvey1981boundary}, which has been a recurring theme in surface topology and mapping class groups \cite{MR3209702,zbMATH01089297,MR1722024}. Throughout this paper, we work in the smooth category, and a "curve" means a simple closed curve, and we assume all intersections between curves are transverse.%For further investigations about $k$-systems, we may refer the readers to \cite{chen2024systems,1ChenShen2024,MR4034921,przytycki2015arcs}.

A multicurve in $S_g$ is \emph{filling} if its complement is a union of discs and it is in minimal position. Note that once a multicurve fills and is in minimal position, it remains filling in all isotopy classes, so in this paper we always treat curves as isotopy classes of curves. Such multicurves play a key role in Thurston's construction of pseudo-Anosov diffeomorphisms \cite{Thur} and its generalisation by Penner \cite{Penner}. When there are two curves, they are called a \emph{filling pair} and correspond to a pair of vertices in the curve complex that are at distance at least $3$.

Among the various studies, a natural problem is to construct filling multicurves in minimal position, and in particular to construct them explicitly. In \cite{AH} Aougab and Huang gave a sharp lower bound on the intersection number of filling pairs on $S_g$ and provided explicit constructions achieving the bound. Then Jefferys, Aougab--Menasco--Nieland and Chang--Menasco gave various constructions of minimal filling pairs in \cite{AMN,CM,Jeff}.

%If the total intersection number achieves the minimal number of intersections for all filling union of curves on a given surface, then it is said to be {\em minimally intersecting}, or minimal. Various studies of it arises and one of the natural problems is the construction of filling multicurves, and in particular, the construction of minimal filling multicurves. When there are two curves, they are called a \emph{filling pair} and correspond to a pair of vertices in the curve complex that are a distance 3 or greater. Aougab--Huang proved the existence of minimal filling pairs in \cite{AH}. Then Jefferys, Aougab--Menasco--Nieland and Chang--Menasco gave various constructions of minimal filling pairs in \cite{Jeff}, \cite{AMN} and \cite{CM} individually.

For general multicurves, the maximal size of filling multicurves has been studied; see, for example, \cite{AACLOSX,AG,PS,KP}. In this paper, we prove Theorem~\ref{thm:main2}, which gives the minimum possible size of a filling $k$-system and provides an explicit construction achieving this minimum. This result follows as a corollary of Theorem~\ref{thm:main1}, which determines the maximal genus of a surface admitting a filling $k$-system of $n$ curves.

\begin{theorem}\label{thm:main1}
    Let $k\ge1,n\ge 2$ be positive integers such that \(n \geq 4\), or \(n = 3\) and \(k \neq 1\), or $n=2$ and $k\not=3$. Among all surfaces $S_g$ admitting a filling $k$-system of $n$ curves, the maximal possible genus $g$ is 
    \begin{equation}\label{eq:gkn}
    g_{k,n}=\left\lfloor \frac{1}{4} k n (n-1) + \frac{1}{2} \right\rfloor,
    \end{equation}where $\lfloor m\rfloor$ is the greatest integer less than or equal to $m$. Moreover, $g_{1,3}=g_{3,2}=1$.
\end{theorem}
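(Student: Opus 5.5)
The upper bound comes from an Euler characteristic count, and the lower bound from explicit constructions.

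\emph{Upper bound.} Let $\Gamma=\gamma_1\cup\dots\cup\gamma_n$ be a filling $k$-system in minimal position on $S_g$. View $\Gamma$ as a $4$-valent graph embedded in $S_g$ whose vertices are the intersection points. Since the system fills, this graph is connected and cellularly embedded. Let $V$ be the number of intersection points. Then $V\le k\binom{n}{2}$, the number of edges is $E=2V$, and the number of faces is some $F\ge 1$. Euler's formula gives
\[
2-2g=V-E+F=F-V,
\]
hence $g=(V-F+2)/2\le (k\binom n2+1)/2$. Because $g$ is an integer, this yields $g\le\lfloor \tfrac14 kn(n-1)+\tfrac12\rfloor=g_{k,n}$.

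Equality needs $F=1$ or $F=2$ with $V$ maximal, and parity forces $V\equiv F \pmod 2$. So if $k\binom n2$ is odd, we need $V=k\binom n2$ and a single complementary disc. If $k\binom n2$ is even, we need $V=k\binom n2$ with $F=2$, or $V=k\binom n2-1$ with $F=1$.

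\emph{Exceptional cases.} Next we show the bound fails for $(k,n)=(1,3)$ and $(3,2)$.
\begin{itemize}
\item For $(1,3)$ we have $V\le 3$, and $g_{1,3}=2$ would force $V=3$ and $F=1$. Three pairwise once-intersecting curves have a regular neighbourhood that is a torus with boundary. A direct check of the ribbon structure shows that such a configuration cannot be filling with one complementary disc, so $g=1$ there; the standard $(1,0),(0,1),(1,1)$ triple on the torus realises $g=1$.
\item For $(3,2)$, a pair with $i(\alpha,\beta)=3$ filling $S_2$ with one disc is excluded by the Aougab--Huang bound: minimal filling pairs on $S_2$ need $4$ intersections. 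Hence $g_{3,2}=1$.
\end{itemize}

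\emph{Lower bound.} This is the main obstacle: we must construct, for each admissible $(k,n)$, $n$ curves pairwise meeting exactly $k$ times (or one fewer for a single pair, when parity requires it) in minimal position, whose union has one or two complementary discs. I would build these as ribbon graphs and proceed in three steps.
\begin{enumerate}
\item Settle the base cases. For $n=2$, use the Aougab--Huang and Jefferys minimal filling pairs. These give filling pairs with $2g-1$ intersections and $F=1$ for $g\ne 2$, and realise the needed $k$ except $k=3$.
\item Induct on $n$. Add a new curve $\gamma_{n+1}$ that crosses each existing $\gamma_i$ exactly $k$ times. Choose it so that each crossing of an arc through a face merges faces, using a surgery or band-sum trick that keeps $F\le 2$ while raising the genus by the right amount.
\item Handle small $k$, especially $k=1,2$, separately. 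Here I would use explicit chord-diagram and ribbon-graph models: one fat vertex per curve, with the cyclic order of crossings chosen so that the boundary traversal is a single cycle.
\end{enumerate}

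For minimal position of the constructed curves, I would verify the absence of bigons. This is automatic when there is one complementary disc whose boundary word contains no bigon pattern, or it can be checked via the bigon criterion directly.

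The hardest part will be step 2: keeping the face count at $1$ or $2$ through the induction while maintaining exactly $k$ intersections with every previous curve. I would first try controlling this by a counting argument on the permutation describing the face boundaries. Choosing the local insertion points appropriately changes this permutation by transpositions, and the aim is to make its number of cycles drop to $1$ (or $2$).
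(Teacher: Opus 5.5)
\textbf{Upper bound and exceptional cases.} Your upper bound is the paper's Euler-characteristic argument (Theorem~\ref{thm:lowerbound}), and your treatment of $n=2$ and of $(3,2)$ via Aougab--Huang also matches the paper. For $(1,3)$ you assert a ``direct check'' without doing it, but it is easy to supply. Each curve carries only two crossings, so the thickened triple is determined by its three crossing signs. Reorienting a curve flips two signs, and reversing the surface orientation flips all three. Hence every configuration is homeomorphic to the neighbourhood of the $(1,0),(0,1),(1,1)$ triple on the torus, which has three boundary components.

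Your remark on minimal position is correct. With one complementary disc no check is needed at all, since an innermost bigon would be a closed disc containing the unique face, hence all of $S_g$. With two discs you must additionally rule out a $2$-gon face.

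\textbf{The gap: the lower bound.} You never show that $g_{k,n}$ is attained for $n\ge 3$, and this sharpness is the main content of the theorem. Step~2 is a hope rather than an argument, and its description is off.

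\begin{itemize}
\item On a fixed closed surface, an arc of a new curve crossing a face splits that face, so faces cannot ``merge''. The genus can only grow if you work with the abstract thickened multicurve. There, each of the $kn$ new plumbings lowers $\chi$ by $1$, so it changes the number of boundary components by $+1$, $-1$ or $-3$, depending on how the four local boundary arcs are connected globally.
\item What is missing is an explicit rule for placing the new crossings along every old curve and along $\gamma_{n+1}$, and for choosing their signs, that provably ends with one or two boundary components. The transposition count you mention is never carried out.
\item The induction also lacks a valid start exactly where it is most delicate. The optimum fails at $(k,n)=(1,3)$ and $(3,2)$, where the thickened systems have three boundary components. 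So for $k=1,3$ the hypothesis $F\le 2$ is unavailable and the step would have to absorb extra components. The paper in fact needs a special non-Reidemeister operation for $n=3$, $k$ odd.
\end{itemize}

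\textbf{How the paper avoids this.} The paper does not induct on $n$. It starts from the complete configuration $St(n,k)$, which already has all $k\binom{n}{2}$ crossings but many boundary components. It then only alters local crossing data:
\begin{itemize}
\item Reidemeister~I moves re-sign a crossing, turning $4$ distinct components into $2$;
\item Reidemeister~II moves swap two adjacent crossings, turning $6$ into $2$.
\end{itemize}
The attaching-graph criterion (Theorem~\ref{Theorem: tree}) reduces the problem to exhibiting a scheme of moves whose graph is a forest with at most two components. This is then done explicitly by cases on $n \bmod 4$, the parity of $k$, and the small cases $n=3,4,5$. Without a construction of this kind, your proposal proves only $g\le g_{k,n}$.
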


\begin{theorem}\label{thm:main2}
The minimal number of curves in a filling \(k\)-system on \(S_g\) is
\begin{equation}\label{equ:n0}
n_0 = n_0(k,g) := \max\left\{\left\lceil \frac{1}{2} \left( 1 + \sqrt{\frac{16g-8}{k} + 1} \right) \right\rceil,2 \right\},
\end{equation}
when $g\geq3$, or $g=2$ and $k\neq1,3$, where $\lceil m\rceil$ is the least integer greater than or equal to $m$. Moreover, $n_0(1,2)=4$ and $n_0(3,2)=3$. In other words, suppose a collection of essential curves \(\gamma_1,\dots,\gamma_n\) intersect pairwise at most \(k\) times, and \(S_g \setminus \bigl( \bigcup_{i=1}^{n} \gamma_i \bigr)\) is a union of disks. Then the minimum possible number \(n\) is \(n_0\) as defined above.
\end{theorem}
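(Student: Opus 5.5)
The plan is to derive Theorem~\ref{thm:main2} from Theorem~\ref{thm:main1} by inverting the formula \eqref{eq:gkn} in $n$. There are two directions: a lower bound, showing that no filling $k$-system on $S_g$ has fewer than $n_0$ curves, and an upper bound, showing that $n_0$ curves suffice.

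\emph{Lower bound.} A single curve never fills, since its complement contains an annulus, so $n\ge 2$. Suppose $n$ curves form a filling $k$-system on $S_g$ and we are not in an exceptional case. Then Theorem~\ref{thm:main1} gives $g\le g_{k,n}=\lfloor kn(n-1)/4+1/2\rfloor$. Since $g$ is an integer, this is equivalent to $g\le kn(n-1)/4+1/2$, that is, $n^2-n-(4g-2)/k\ge 0$. This in turn is equivalent to
\[
n\ \ge\ \frac12\Bigl(1+\sqrt{\tfrac{16g-8}{k}+1}\Bigr).
\]
Combined with $n\ge2$, this gives $n\ge n_0$.

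The exceptional pairs must be handled separately. The pair $(n,k)=(3,1)$ and the pair $(2,3)$ only fill surfaces with $g\le 1$, by the ``moreover'' part of Theorem~\ref{thm:main1}. So for $g\ge 2$ these pairs are simply excluded. For $g\ge 3$ they are excluded anyway, since the formula already forces more curves. For $g=2$ they produce the stated special values:
\begin{itemize}
\item For $k=1$ the formula gives $\lceil (1+5)/2\rceil=3$, but three curves are impossible, so $n\ge4$.
\item For $k=3$ the formula gives $2$, but a pair is impossible, so $n\ge3$.
\end{itemize}
I will also check that $g_{k,n}$ is nondecreasing in $n$, so that the inequality above holds for every admissible $n$ and not only at $n_0$.

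\emph{Upper bound.} By the computation above, $n_0$ is the least admissible $n$ with $g\le g_{k,n}$. In the special cases we have $g_{1,4}=3\ge2$ and $g_{3,3}=\lfloor 5\rfloor\ge 2$. So it suffices to prove the following statement: \emph{for admissible $(k,n)$, every genus $g$ with $1\le g\le g_{k,n}$, or at least every $g$ with $g_{k,n-1}<g\le g_{k,n}$, carries a filling $k$-system of $n$ curves.} Theorem~\ref{thm:main1} as stated only asserts realisability at the maximal genus, so this is the real content.

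My first approach would be to revisit the explicit construction realising $g_{k,n}$ and check that it is parametrised flexibly. The idea is to reduce the number of intersections between one chosen pair of curves by one or two, keeping it at most $k$ and keeping the curves in minimal position. By the Euler characteristic count $2-2g=-I+F$, where $I$ is the total intersection number and $F$ the number of disc faces, reducing $I$ while keeping $F$ essentially fixed lowers the genus one step at a time. As an alternative, note that a filling $k'$-system is also a filling $k$-system for every $k'\le k$. Theorem~\ref{thm:main1} applied with smaller $k'$ then realises the genera $g_{k',n}$, and these may already cover the needed window. For $n=2$ and $g\ge3$, the minimal filling pairs of Aougab--Huang, with $2g-1\le k$ intersections, give the result directly.

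The main obstacle is this intermediate-genus realisability. One must ensure that each step of the intersection-reducing surgery preserves both the filling property and minimal position, and does not merge or split faces in a way that changes $F$ unpredictably. I would attack it by performing the reduction inside a single bigon-free rectangle of the standard construction, where the change in the face structure can be verified locally.
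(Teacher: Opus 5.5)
Your lower bound is correct and is the paper's own argument: Theorem~\ref{thm:lowerbound} inverted, with the two exceptional values coming from $g_{1,3}=g_{3,2}=1$. The gap is in the other direction. You correctly identify that the real content is realising every genus $g$ with $g_{k,n-1}<g<g_{k,n}$ by $n$ curves, but you do not prove it. This is exactly what the paper establishes separately as Theorem~\ref{thm:main3}, and neither of your suggested mechanisms closes it.

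\emph{Passing to $k'<k$.} This cannot cover the window in general. For $k=2$, $n=10$ you need every $37\le g\le 44$, while $g_{1,10}=23$. The paper uses the step $k-1\to k$ only for $n\le 7$, and even there it still has to fill $g_{k-1,n}<g<g_{k,n}$ by other means.

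\emph{Deleting crossings of one pair.} This is not a worked-out operation. By $2-2g=F-I$, removing a single crossing forces $F$ to change by an odd amount, and you give no control of $F$. Also, a single pair has only $k$ crossings, while the required drop $g_{k,n}-g$ can be about $k(n-1)/2$. You would therefore have to modify many pairs at once.

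More seriously, your proposed verification, a local check inside one bigon-free rectangle, cannot certify minimal position. A bigon in the bigon criterion is an embedded disc bounded by one arc of each of two curves, and other curves may cross it. So a bigon can be a union of many faces spread over the whole configuration. Moreover, any change to the thickened multicurve changes the capped surface $S_T$ globally, so whether a distant union of regions becomes such a disc depends on the global boundary pattern.

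The paper handles this differently. It keeps every pairwise intersection number equal to $k$ and instead raises the number $b$ of boundary components. It does this by omitting selected Reidemeister~I and~II moves from the maximal-genus construction; each omission lowers $g=(i-b+2)/2$ by $1$ or $2$. It then proves (Lemmas~\ref{lem:candidate1} and~\ref{lem:candidate2}) that any bigon must be a union of consecutive regions $B_{i+1},\dots,B_j$ between the same two curves, with no surviving move inside it or crossing its boundary. These candidates are ruled out because the left loco operations are always retained. For $n\le 7$ the paper does a case-by-case count of omittable moves, combined with induction on $k$. Some global argument of this kind is what your proposal is missing.

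Separately, for $n_0=2$ you should also cover $g=2$, $k\ge 4$. This case uses the Aougab--Huang filling pair on $S_2$ with $4$ intersections, not only the pairs for $g\ge 3$.
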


The case $n=2$ follows from Lemma 2.1 and Theorem 2.17 of \cite{AH}. For a filling pair $(\alpha,\beta)$ on $S_g$, the intersection number is at least $2g-1$, and on $S_2$ it is at least $4$. Consequently, when $n=2$, the maximal genus of a surface admitting a filling pair with intersection number at most $k$ is $\lfloor\frac{k+1}{2}\rfloor$ for $k\ge 4$, and $1$ for $k=1,2,3$. In this paper, we only deal with the cases in which $n\ge 3$.

The paper is structured as follows. In Section~\ref{section: Background} we recall some notation concerning systems of curves on surfaces, for the sake of rigour and completeness. The main ingredient of the proof is the thickened multicurve introduced in Section~\ref{subsec:thickenedmulticurve}. Using this, we prove the upper bound for \(g\) in Theorem~\ref{thm:main1} and the lower bound for \(n\) in Theorem~\ref{thm:lowerbound}. In the remaining sections, we prove that the upper bound in Theorem~\ref{thm:main1} and the lower bound in Theorem~\ref{thm:main2} are sharp. Specifically, we construct a filling $k$-system consisting of $n$ curves on a surface of maximal genus $g_{k,n}$. The construction is given separately for odd and even $k$ in Section \ref{sec:kodd}. However, to complete the proof of Theorem~\ref{thm:main2}, we must also construct a filling $k$-system of $n$ curves on $S_g$ for any genus $g$ with $g_{k,n-1}<g<g_{k,n}$, this is carried out in Section \ref{sec:completion2}.

\section{Background}\label{section: Background}

\subsection{Curves on surfaces}

\begin{definition}[Curves and multicurves]
A \emph{simple closed curve}, or a \emph{simple loop}, on a surface $S$ is defined by an embedding $\gamma : \mathbb{S}^1 \hookrightarrow S$. A collection of curves on $S$ is called a \emph{multicurve}. Throughout this paper, we use the term \emph{curve} to mean a simple closed curve, and we always assume that the curves in a multicurve intersect pairwise \emph{transversely}, i.e. they intersect transversely at all intersection points. We will often conflate a (multi)curve with its image.
\end{definition}

\begin{definition}[Regular neighbourhoods of curves]
\label{definition RN}
Let $L$ be a multicurve on $S$. A regular neighbourhood $W(L)$ of $L$ is a locally flat, closed subsurface of $S$ containing $\cup_{\gamma \in L} \gamma$ such that there is a strong deformation retraction $H: W(L) \times I \rightarrow W(L)$ onto $\cup_{\gamma \in L} \gamma$ where $H\vert_{(W(L) \cap {\partial S}) \times I}$ is a strong deformation retraction onto $\cup_{\gamma \in L} \gamma \cap {\partial S}$.
\end{definition}

\begin{definition}[Free isotopies]
Let $\gamma_0$ and $\gamma_1$ be two curves. We say $\gamma_0$ is freely isotopic to $\gamma_1$ if there exists a continuous map $\tilde{\gamma}: [0,1] \times \mathbb{S}^1 \rightarrow S$ such that $\tilde{\gamma}(0,\mathbb{S}^1)$ and $\tilde{\gamma}(1,\mathbb{S}^1)$ represent $\gamma_0$ and $\gamma_1$, respectively. We denote the \emph{free isotopy class} of $\gamma$ by $[\gamma]$.
%Let $\gamma_0 , \gamma_1$ be two curves. We say $\gamma_0$ is freely isotopic to $\gamma_1$, if there is a family of curves $\left\{ \tilde{\gamma}_t \middle| t \in [0,1] \right\}$ such that $\tilde{\gamma}_0 = \gamma_0 , \tilde{\gamma}_1 = \gamma_1$ and $\tilde{\gamma}: [0,1] \times \mathbb{S}^1 \rightarrow S$ is a continuous map. We denote the \emph{free isotopy class} by $H := [\gamma]$.
\end{definition}

\begin{definition}[Essential curves]
We say that a simple curve is \emph{essential} if it is not homotopically trivial, cannot be isotoped into any boundary component, and is primitive.
\end{definition}

\subsection{Systems of curves}

\begin{definition}[Geometric intersection numbers]
Given two curves $\gamma_1, \gamma_2: \mathbb{S}^1 \rightarrow S$, we define the \emph{geometric intersection number} $i (\gamma_1 , \gamma_2)$ as
\begin{align}
i (\gamma_1 , \gamma_2) := \# \left\{ (z_1, z_2) \in \mathbb{S}^1 \times \mathbb{S}^1 \middle| \gamma_1(z_1) = \gamma_2(z_2) \right\} . \notag
\end{align}
Consider two free isotopy classes (not necessarily distinct) $[\gamma_1] , [\gamma_2]$ of two simple curves $\gamma_1, \gamma_2$ on $S$. We define the \emph{geometric intersection number} of $[\gamma_1]$ and $[\gamma_2]$ as follows:
\begin{align}
i ([\gamma_1] , [\gamma_2]) := \min \left\{ i (\bar{\gamma}_1 , \bar{\gamma}_2) \middle|  \bar{\gamma}_1 \in [\gamma_1] , \bar{\gamma}_2 \in [\gamma_2] \right\}. \notag
\end{align}
\end{definition}

\begin{definition}[Systems of curves]
\label{definition: Systems of curves}
A \emph{system} $L$ of curves on $S$ is a multicurve, such that the curves in $L$ are essential, any two distinct curves $\gamma_1, \gamma_2$ are non-isotopic, and lie in minimal position (that is $i(\gamma_1, \gamma_2) = i([\gamma_1], [\gamma_2])$), see \cite{MR2850125}.
\end{definition}

\begin{definition}[$k$-systems of curves]
\label{definition: $k$-systems of curves}
We call a system of curves a \emph{$k$-system of curves} on $S$ if the geometric intersection number of any pair of elements in the system is at most $k$. Let $\mathscr{L}(S,k)$ be the collection of all $k$-systems of curves on $S$.
\end{definition}

\begin{definition}[Complete $k$-systems of curves]
\label{definition: Complete $k$-systems of curves}
We call a system of curves a \emph{complete $k$-system of curves} on $S$ if the geometric intersection number of any pair of elements in the system is exactly $k$. We let $\widehat{\mathscr{L}}(S,k)$ be the collection of all complete $k$-systems of curves on $S$.
\end{definition}

\begin{definition}[Filling systems of curves]
\label{definition: Filling systems of curves}
    A system of curves in a compact surface $S$ is \emph{filling} if their complement in $S$ is a collection of disks. In Section 3 of \cite{AMN}, the authors provided an example of a filling system, which consists of two curves (see \cref{fig:filling pair}).
\end{definition}

\begin{figure} [H]
    \centering
    \includegraphics[width=1\linewidth]{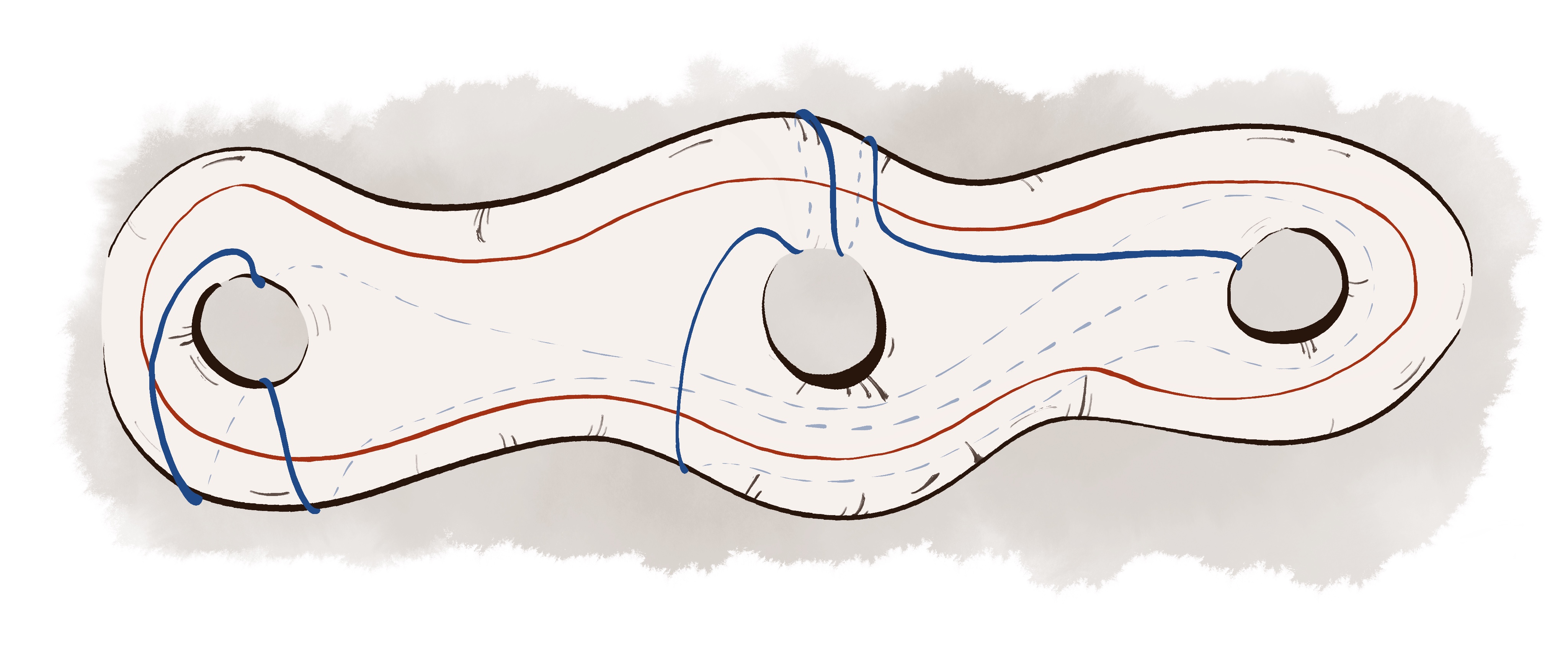}
    \caption{The complement of the red curve and the blue curve is a disk; hence, they form a filling pair.}
    \label{fig:filling pair}
\end{figure}

\subsection{Thickened multicurve}\label{subsec:thickenedmulticurve}

\begin{comment}
\begin{definition} [thickened multicurve]
\label{definition: Thicken system of curves}
A system of curves $L$, together with its regular neighbourhood, is called a \emph{thickened multicurve}, denoted by $T$. We call $L$ the \emph{core} of $T$. The disjoint components of $\partial T$ are called the \emph{boundary components} of $T$.
In this paper, we only discuss the case where $T$ is orientable.
\end{definition}
\end{comment}

\begin{definition} [Thickened multicurve]
\label{definition: Thicken system of curves}
Let $T$ be an \emph{orientable} surface with boundary and $L$ be a multicurve on $T$ such that there exists a deformation retraction map $r:T\rightarrow L$. Then we call $T$ a \emph{thickened multicurve} and $L$ the \emph{core} of $T$.
\end{definition}

In particular, a disjoint union of annuli is an example of a thickened multicurve whose core is a collection of simple closed curves. When the core is a (complete) $k$-system of curves, we call it a \emph{thickened (complete) k-system of curves}.

\begin{definition} [Plumbing]
\label{definition: Plumbing}
Let $T$ be a thickened multicurve and let $L$ be its core. 
Let $\alpha_1,\alpha_2$ be two disjoint open segments of $L$. Perform the surgery along $\alpha_1$ and $\alpha_2$ as illustrated in the following figure. We call this operation a \emph{plumbing} surgery. (see \cite{MR358813}, p.114 for a precise definition)

Every thickened multicurve appearing in this paper can be obtained from a disjoint union of annuli (thickened single curves) by a sequence of such plumbing surgeries.
\end{definition}

\begin{figure} [H]
    \centering
    \includegraphics[width=1\linewidth]{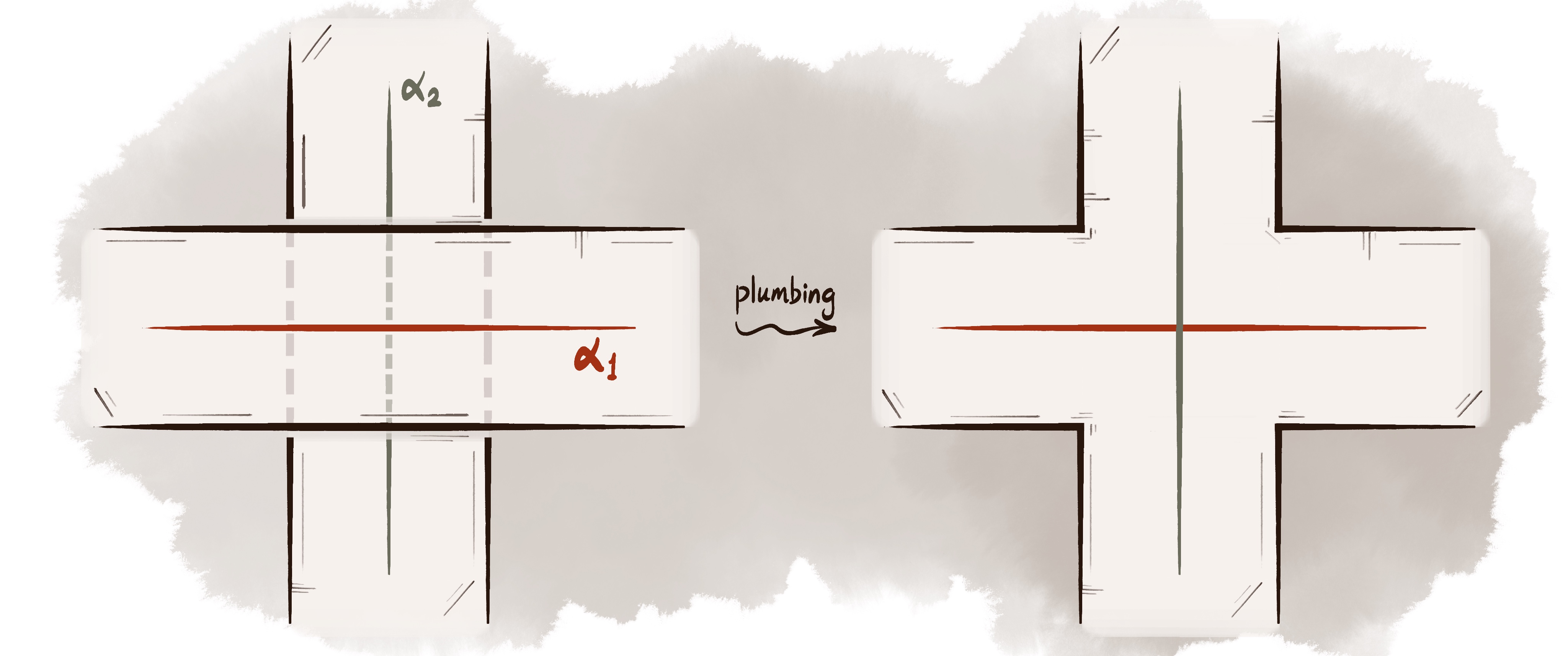}
    \caption{Plumbing.}
    \label{fig:Plumbing}
\end{figure}

\begin{definition}[Associated surfaces]
\label{definition:associated surfaces}
Let $T$ be a thickened multicurve. For each boundary component of $T$, we glue a disk to it and obtain a compact surface. We denote this new surface by $S_T$, and call it the \emph{associated surface of the thickened multicurve $T$}.
\end{definition}

\begin{proposition}
\label{proposition: genus of surface for TSL}
Suppose a multicurve $L$ has a total of $i$ intersection points and the thickened system $T$ of $L$ has $b$ boundary components. 
Then $L$ fills $S_T$, and the genus of $S_T$ is 
\begin{align}
    g = \frac{i - b + 2}{2}. \notag
\end{align}
\end{proposition}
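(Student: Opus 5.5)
The plan is to view $L$ as the image of a 4-valent graph on $S_T$ and compute the Euler characteristic of $S_T$ from the resulting cell decomposition. The filling statement and the genus formula then follow almost immediately.

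\emph{Filling.} Since $T$ deformation retracts onto $L$, each component of $T\setminus L$ is a half-open annulus $C\times[0,1)$, where $C$ is a boundary component of $T$. Indeed, $T$ is a regular neighbourhood of $L$, so $T\setminus L$ is a collar on $\partial T$. Gluing a disk along each boundary circle $C$ turns each such collar into an open disk. Hence $S_T\setminus L$ is a disjoint union of exactly $b$ open disks, one for each boundary component of $T$. This is precisely the statement that $L$ fills $S_T$. The minimal position requirement in the paper's notion of filling is not part of this proposition's content; we only need that the complement consists of disks. Alternatively, one can note that a filling multicurve with disk complements has no bigons, but I will not need this.

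\emph{Euler characteristic.} Assume $L$ is connected with $i\ge1$. This holds in all cases of interest, since $S_T$ is connected exactly when $L$ is. Give $S_T$ the following CW structure:
\begin{itemize}
\item the $0$-cells are the $i$ intersection points;
\item the $1$-cells are the arcs of $L$ between consecutive intersection points;
\item the $2$-cells are the $b$ complementary disks.
\end{itemize}
Because every intersection point is a transverse double point, the graph is $4$-valent. Counting edge ends gives $2e=4i$, so $e=2i$. Therefore
\[
\chi(S_T)=i-2i+b=b-i.
\]

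\emph{Genus.} The surface $S_T$ is closed and orientable: $T$ is orientable, and capping its boundary with disks preserves orientability. So $\chi(S_T)=2-2g$, and solving gives
\[
g=\frac{i-b+2}{2}.
\]

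The only step that needs care is the claim that $T\setminus L$ consists of collars, one per boundary component, so that capping yields exactly $b$ disks. I would justify it using the regular-neighbourhood structure given in Definition~\ref{definition RN}: a regular neighbourhood of a graph is built from a disk at each vertex and a band along each edge, and its complement to the graph is a collar of the boundary.

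Two degenerate situations need separate treatment.
\begin{itemize}
\item If some component of $L$ is a curve meeting no other curve, then that component contributes an annulus, and $S_T$ has a sphere component. In that situation one adds a vertex on the curve, and the count still gives $\chi=b-i$ componentwise.
\item For disconnected $L$, the formula should be read with the convention that $S_T$ is connected. That is the case whenever $L$ is connected, which holds for filling systems.
\end{itemize}
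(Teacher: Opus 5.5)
Your proof is correct and follows the paper's argument: you use the same cell decomposition of $S_T$ with $i$ vertices, $2i$ edges (by $4$-valence) and $b$ disk faces, giving $2-2g=\chi(S_T)=b-i$, and your collar, orientability and connectedness remarks make explicit what the paper leaves implicit. Drop the unused aside that disk complements rule out bigons: it is false, since a complementary disk can itself be a bigon, which is exactly why the paper later needs its bigon-candidate analysis; the proposition gives filling only in the sense of disk complements, as you otherwise correctly say.
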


\begin{proof}
By \cref{definition:associated surfaces}, if we cut the surface along the given multicurve, the resulting components are disks; hence the multicurve is filling. The induced cell decomposition has $i$ vertices, $2i$ edges, and $b$ faces. By the Euler characteristic formula,
\begin{align}
    2-2g=\chi(S)=i-2i+b. \notag
\end{align}
Solving for $g$ gives $g=\frac{i-b+2}{2}$, as claimed.
\end{proof}

\begin{corollary}
\label{corollary: both odd or even}
    The number of intersections $i$ and boundary components $b$ are both odd or both even. In particular, when $i$ is even, $b\ge 2$.
\end{corollary}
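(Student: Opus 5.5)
This is immediate from \cref{proposition: genus of surface for TSL}. The plan is to read off parity from the genus formula $g = \frac{i-b+2}{2}$, and then handle the special case $i$ even separately.

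For the parity claim: since the genus $g$ of $S_T$ is a non-negative integer, the formula gives
\[
i - b + 2 = 2g,
\]
which is even. Hence $i - b$ is even, so $i \equiv b \pmod 2$. That is, $i$ and $b$ are both odd or both even.

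For the second assertion, suppose $i$ is even. Then $b$ is even by the first part. Moreover, $b \geq 1$, because $T$ is a compact surface with nonempty boundary. Indeed, a thickened multicurve deformation retracts onto its core, which is a graph. So $T$ cannot be closed: a closed orientable surface does not deformation retract onto a one-dimensional complex, since its top homology is nonzero. An even integer that is at least $1$ is at least $2$, so $b \geq 2$.

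The only point that needs any care is justifying $b \geq 1$. I would phrase it through the retraction $r\colon T \to L$ given in \cref{definition: Thicken system of curves}. Alternatively, one can note directly that the regular neighbourhood of a nonempty multicurve has nonempty boundary. Everything else is arithmetic on the Euler characteristic formula already established.
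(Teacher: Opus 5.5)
Your proposal is correct and is exactly the argument the paper intends: the corollary is stated without proof as an immediate consequence of the Euler characteristic computation $2-2g = i-2i+b$, which forces $i-b = 2g-2$ to be even, combined with $b \ge 1$ (which the paper also uses in the proof of Theorem~\ref{thm:lowerbound}). Your homological justification of $b \ge 1$ is valid but not really needed, since the definition of a thickened multicurve already takes $T$ to be a surface with nonempty boundary.
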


\begin{theorem}\label{thm:lowerbound}
If $S_g$ admits a filling $k$-system of $n$ curves, then $g \leq g_{k,n}$, where $g_{k,n}$ is defined in \eqref{eq:gkn}. The minimal number of curves in a filling $k$-system on $S_g$ is at least $n_0 = n_0(k,g)$ defined in \eqref{equ:n0}.
\end{theorem}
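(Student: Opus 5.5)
Given a filling $k$-system $L=\{\gamma_1,\dots,\gamma_n\}$ on $S_g$, I will take a regular neighbourhood $T=W(L)$. This is a thickened multicurve, and since $L$ fills, $S_T\cong S_g$. \cref{proposition: genus of surface for TSL} then gives
\begin{align}
g=\frac{i-b+2}{2}, \notag
\end{align}
where $i$ is the total number of intersection points and $b\ge1$ is the number of boundary components of $T$. The curves are simple, so every intersection point lies on two distinct curves. Each pair meets at most $k$ times, hence
\begin{align}
i\le k\binom{n}{2}=\frac{kn(n-1)}{2}. \notag
\end{align}
It follows that $g\le \frac{1}{2}\bigl(\frac{kn(n-1)}{2}-b+2\bigr)\le \frac{kn(n-1)}{4}+\frac12$. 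Since $g$ is an integer, $g\le \bigl\lfloor \frac14 kn(n-1)+\frac12\bigr\rfloor=g_{k,n}$, which is the first assertion.

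I expect no obstacle from parity: the floor already absorbs it. If one wants a sharper statement, \cref{corollary: both odd or even} forces $b\ge2$ whenever $i$ is even, which gives the refinement $g\le \frac{i}{2}$ in that case. Either way the bound $g\le g_{k,n}$ holds, and I will only record the refinement as a remark.

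For the second assertion, $g\le g_{k,n}$ yields $g\le \frac14 kn(n-1)+\frac12$, that is,
\begin{align}
n^2-n-\frac{4g-2}{k}\ge 0. \notag
\end{align}
Because $n\ge1$ is positive, this forces $n\ge \frac12\bigl(1+\sqrt{1+\frac{16g-8}{k}}\bigr)$, and hence $n$ is at least the ceiling of this quantity, as $n$ is an integer. A filling system on $S_g$ with $g\ge1$ needs at least two curves, because a single curve has an annulus, not a disk, as a neighbourhood. Combining the two gives $n\ge n_0(k,g)$. The only step needing care is the direction of the quadratic inequality, which is fine since the larger root is positive and $n>0$.
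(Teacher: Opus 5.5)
Your proof is correct and follows the paper's argument: you bound the total intersection count by $\frac12 kn(n-1)$, substitute it with $b\ge 1$ into $g=\frac{i-b+2}{2}$, take the floor to get $g\le g_{k,n}$, and then solve the quadratic in $n$. Your version is slightly more careful than the paper's, since you keep $i\le \frac12 kn(n-1)$ as an inequality and justify the choice of root; your separate argument for $n\ge 2$ is valid but redundant, because for $g\ge 1$ the quantity $\frac12\bigl(1+\sqrt{1+\frac{16g-8}{k}}\bigr)$ already exceeds $1$, so its ceiling is at least $2$.
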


\begin{proof}
Suppose we have a filling $k$-system on $S_g$ consisting of $n$ curves that pairwise intersect at most $k$ times. 
Then the total number of intersection points is at most $\frac{1}{2}kn(n-1)$. 
Let the number of boundary components be $b\ge 1$. 
By \cref{proposition: genus of surface for TSL}, we have
\begin{equation}
g=\frac{1}{4}kn(n-1)-\frac{b}{2}+1
\le
\left\lfloor \frac{1}{4}kn(n-1)+\frac{1}{2}\right\rfloor .
\end{equation}
Equality holds if and only if $b=1$ or $2$. 
Therefore we obtain
\begin{equation}
n \ge \left\lceil \frac{1}{2}\left(1+\sqrt{\frac{16g-8}{k}+1}\right) \right\rceil ,
\end{equation}
where $\lfloor x\rfloor$ (resp. $\lceil x\rceil$) denotes the greatest integer less than or equal to $x$ (resp. the smallest integer greater than or equal to $x$).
\end{proof}

In the next few sections, we prove that this lower bound is optimal. Specifically, we construct a thickened multicurve $T$ whose number of boundary components is either 1 or 2. This shows that the upper bound on the genus is achieved, thereby establishing the optimality of the lower bound.

\section{Construction of $k$-system on $S_{g_{k,n}}$}\label{sec:kodd}

In this section, we give a construction of a filling $k$-system of $n$ curves on $S_{g_{k,n}}$, which shows that the upper bound in Theorem~\ref{thm:lowerbound} is optimal. This completes the proof of Theorem~\ref{thm:main1}. 

We first construct a thickened $k$-system consisting of $n$ curves, called the \textit{stairs}. Then, by modifying the nature of the intersection points, specifically through the transformations later defined as Reidemeister I and Reidemeister II, we eventually obtain a thickened $k$-system with only one or two boundary components.

Note that the constructions for odd $k$ and even $k$ are quite different; moreover, in each case the construction also depends significantly on the residue of $n$ modulo $4$, and when $n$ is small ($n = 3, 4, 5$) we need some special constructions. To make the construction easier to follow, we give a brief introduction to the structure of the section below. 

In Section~\ref{sec:def_stairs} we define the \emph{stairs} as a thickened $k$-system in Definition~\ref{def:stairs}, and define \emph{Reidemeister moves} in Definitions~\ref{def:Reidemeister1} and \ref{def:Reidemeister2} which are used to carry out the construction. In Section~\ref{sec:construction:ngeq6} we give the construction for $n \geq 6$. We discuss the case $k = 1$ in Section~\ref{sec:construction:ngeq6_1}, and the case $k \geq 2$ in Section~\ref{sec:construction:ngeq6_2}, where we distinguish four cases based on the residue of $n$ modulo $4$. In particular, the constructions for $k \equiv 2 \pmod{4}$ differ significantly between odd and even $k$. In Section~\ref{sec:construction:nleq5} we give the constructions for $n = 3, 4, 5$ respectively. For $n = 3$ and odd $k\geq3$, a special operation beyond Reidemeister moves is required.

\subsection{Definitions}\label{sec:def_stairs}

\begin{definition}[Stairs]\label{def:stairs}
    A \emph{stairs} $St(n,k)$ is defined as the following staircase-shaped thickened $k$-system. 
    Place $n$ arcs $\delta_1,\dots,\delta_n$ in the plane such that for each $1 \le j \le n$, the arc $\delta_j$ starts at the point $\bigl(\frac{j}{n}, 0\bigr)$, then follows  a path of upward and rightward straight line segments passing successively through the points  
\[
\Bigl(\frac{j}{n},\frac{j}{n}\Bigr),\;
\Bigl(\frac{j}{n}+1,\frac{j}{n}\Bigr),\;
\Bigl(\frac{j}{n}+1,\frac{j}{n}+1\Bigr),\dots,\;
\Bigl(\frac{j}{n}+\frac{k-1}{2},\frac{j}{n}+\frac{k-1}{2}\Bigr),
\] 
    and ends at $\bigl(\frac{1}{n}+\frac{k+1}{2},\frac{j}{n}+\frac{k-1}{2}\bigr)$. If $k$ is even, it passes successively through the points 
\[
\Bigl(\frac{j}{n},\frac{j}{n}\Bigr),\;
\Bigl(\frac{j}{n}+1,\frac{j}{n}\Bigr),\;
\Bigl(\frac{j}{n}+1,\frac{j}{n}+1\Bigr),\dots,\;
\Bigl(\frac{j}{n}+\frac{k}{2},\frac{j}{n}+\frac{k-2}{2}\Bigr),
\] 
    and ends at $\bigl(\frac{j}{n}+\frac{k}{2},\frac{1}{n}+\frac{k}{2}\bigr)$. Then every pair of distinct arcs $\delta_i,\delta_j$ intersect exactly $k$ times.

    %\large \textbf {To Xiao Chen, please draw a case when $k$ is odd and $k$ is even instead of 1 and 3.}

    \normalsize

    Taking a tubular neighbourhood of the whole configuration and then cyclically identifying the endpoints of each arc $\delta_j$, we simultaneously glue the corresponding ends of the tubular neighbourhood. This gluing is performed in the unique way that makes the resulting thickened multicurve orientable. The resulting object is the thickened multicurve $St(n,k)$.
\end{definition}

\begin{figure}[H]
    \centering
    \includegraphics[width=1\linewidth]{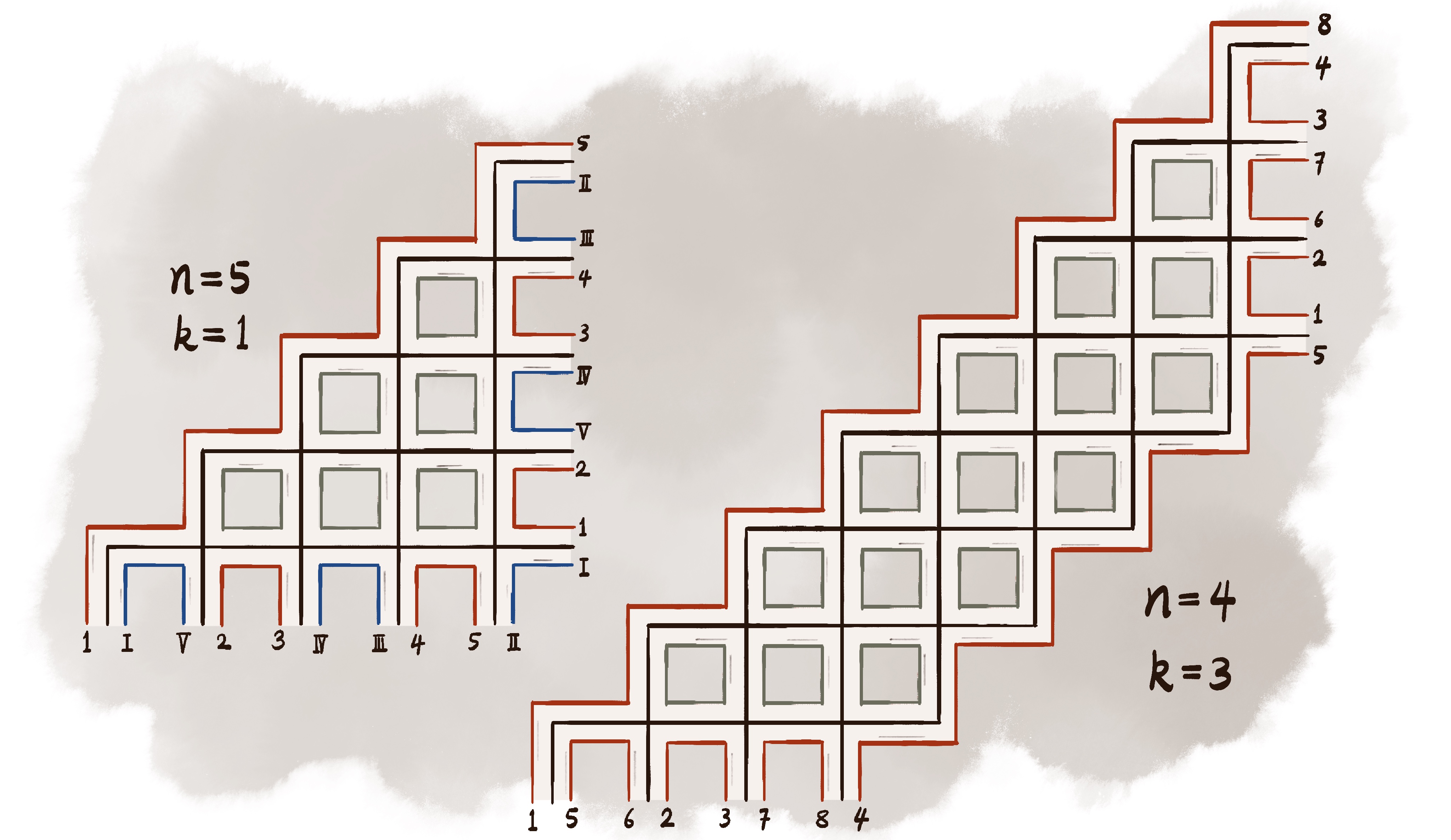}
    \caption{Examples of stairs when $k$ is odd.}
    \label{fig:k1n5k3n4_1}
\end{figure}

\begin{figure}[H]
    \centering
    \includegraphics[width=1\linewidth]{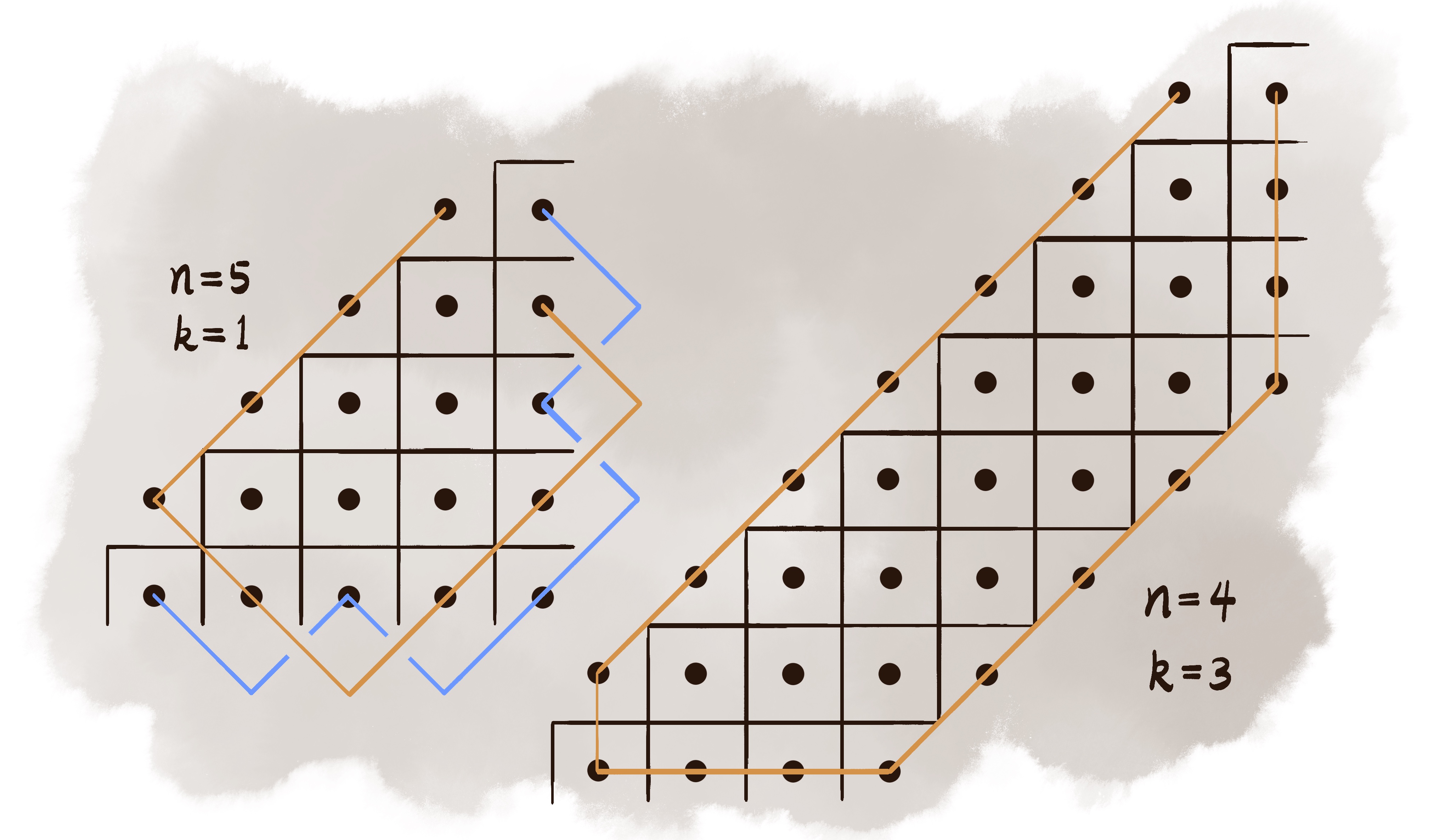}
    \caption{Examples for odd \(k\). In the boundary of the thickened system, the outer parts are treated as vertices; two vertices are joined if the corresponding parts lie in the same boundary component. When both \(n\) and \(k\) are odd, the outer part consists of two components (coloured yellow and blue); when \(n\) is even and \(k\) is odd, the outer part consists of a single component (coloured yellow).}
    \label{fig:k1n5k3n4_2}
\end{figure}

\begin{figure}[H]
    \centering
    \includegraphics[width=1\linewidth]{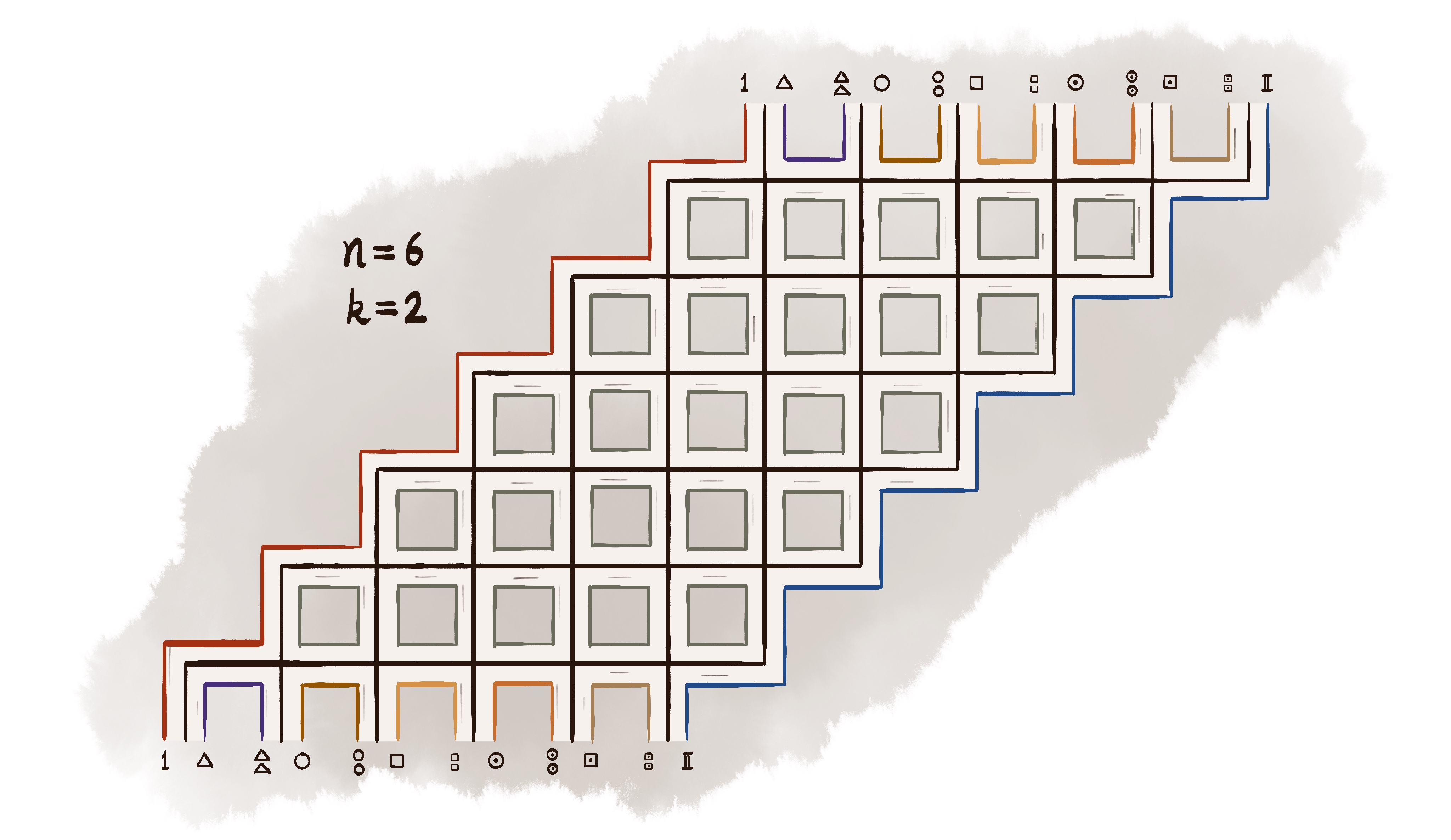}
    \caption{An example of a stair when $k$ is even.}
    \label{fig:k2n6_1}
\end{figure}

\begin{figure}[H]
    \centering
    \includegraphics[width=1\linewidth]{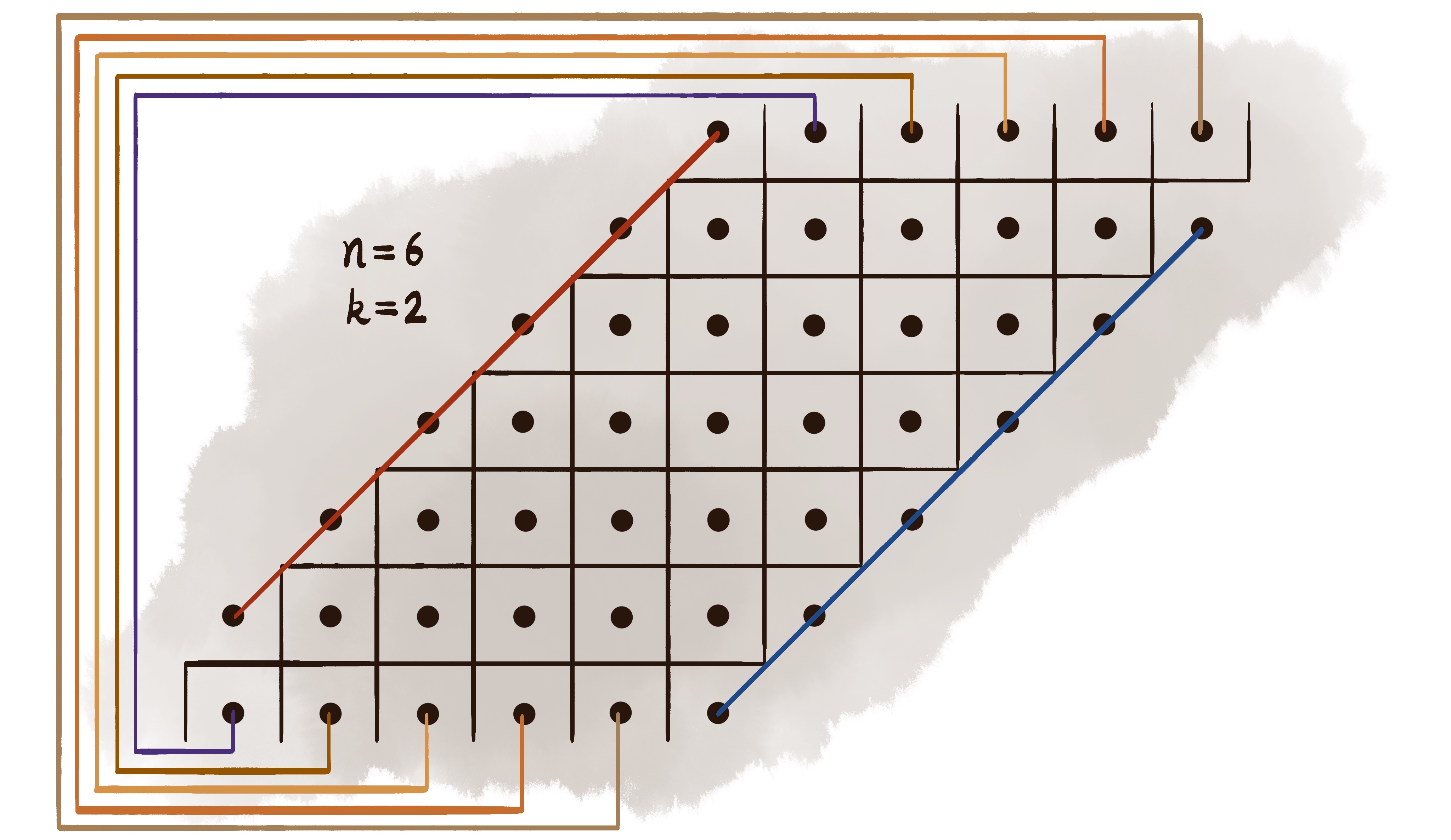}
    \caption{Examples for even $k$. In the boundary of the thickened system, the outer part contains $n+1$ components: a left component (coloured red), a right component (coloured blue), and the remaining $n-1$ components, where the parts above and below are paired in order.}
    \label{fig:k2n6_2}
\end{figure}

When $k$ is odd, the two examples shown above are $St(5,1)$ and $St(4,3)$ as shown in Figure \ref{fig:k1n5k3n4_2}. For each $i = 1,\dots,n$, $A_i$ is glued to $A_i'$ and $B_i$ is glued to $B_i'$. When $k$ is odd and $n$ is odd, the outer boundary of $St(n,k)$ in the above figure consists of two connected components, coloured red and blue; when $k$ is odd and $n$ is even, the outer boundary has only one connected component.

When $k$ is even, the example shown above is $St(6,2)$ as shown in Figure \ref{fig:k2n6_2}. There are two boundary components on the left and right, and each pair of cells on the top and bottom is a distinct boundary component, giving $n+1$ boundary components in total.

\begin{definition}[Reidemeister I move]\label{def:Reidemeister1}
    For each intersection point, we can reverse the orientation of the thickened multicurve at that point using the method shown on the left of Figure~\ref{fig:Surgeries.}, yielding a new thickened multicurve. This operation is called a \emph{Reidemeister I move}. Note that around an intersection point there are four components; after performing this orientation reversal, the diagonal components are paired together, and each pair is connected into a single component.
\end{definition}

\begin{definition}[Reidemeister II move]\label{def:Reidemeister2}
    For each pair of adjacent intersection points, we can exchange the positions of the two intersections on the thickened multicurve as shown on the right of Figure~\ref{fig:Surgeries.}, yielding a new thickened multicurve. This operation is called a \emph{Reidemeister II move}. Note that around the two intersection points there are six components—three above and three below. After performing this surgery,  the three components above are connected into a single component, and similarly the three components below are connected into a single component.
\end{definition}

    \begin{figure} [H]
        \centering
        \includegraphics[width=1\linewidth]{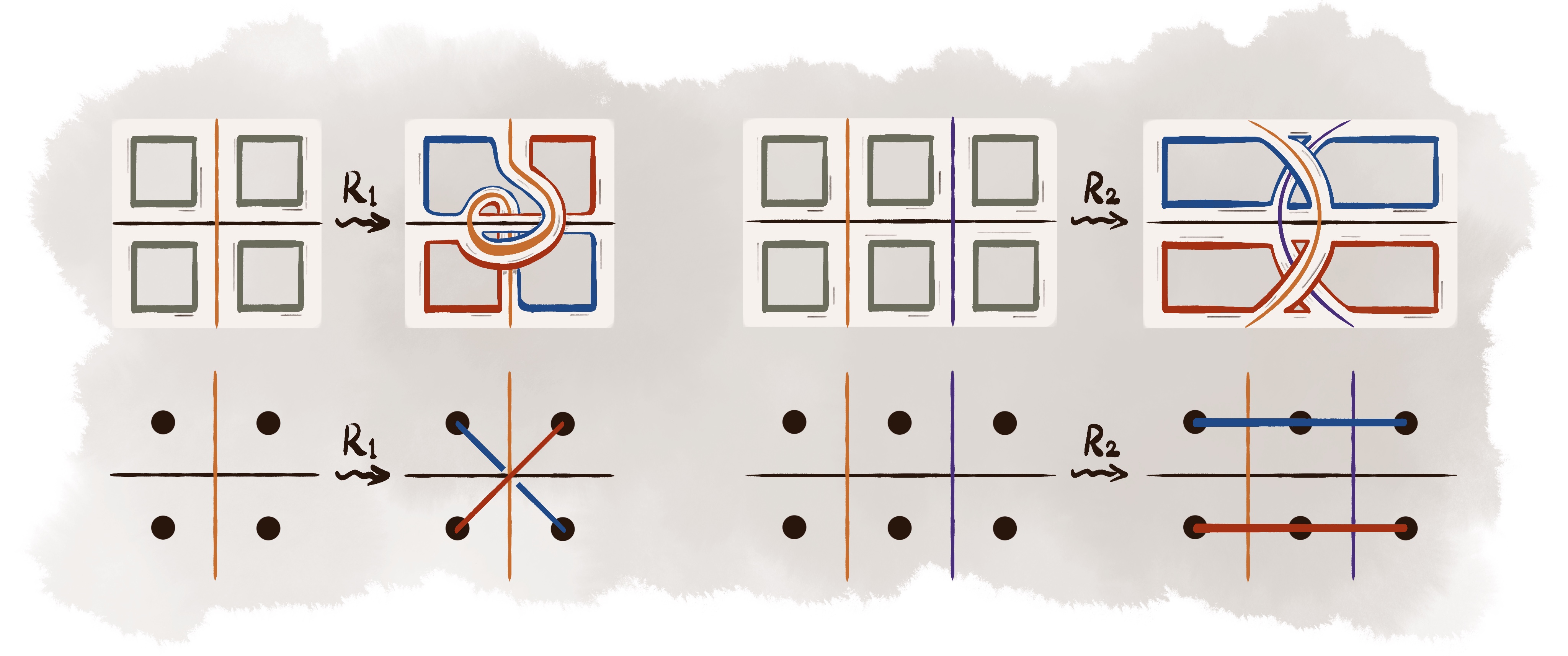}
        \caption{The Reidemeister moves.}
        \label{fig:Surgeries.}
    \end{figure}

In the next two subsections, starting from the stairs, we apply Reidemeister I and Reidemeister II moves at intersection points to reduce the number of boundary components of the thickened multicurve to one or two.

In a stairs, at an intersection point there are four surrounding components: upper‑left, upper‑right, lower‑left, and lower‑right. Performing a Reidemeister~I move at that point connects the upper‑left component with the lower‑right component, and simultaneously connects the upper‑right component with the lower‑left component. If the upper‑left and lower‑right belong to different components, the upper‑right and lower‑left also belong to different components, and these two pairs are not the same pair of components, then the Reidemeister~I move reduces the number of boundary components by \(2\) and increases the genus of the surface by \(1\). In the left part of Figure~\ref{fig:Surgeries.}, a cross mark indicates a Reidemeister~I move.

If around two adjacent intersections there are six components such that the three components on one side are pairwise distinct, the three components on the other side are also pairwise distinct, and the two triples share at most one component in common, then performing a Reidemeister~II move reduces the number of boundary components by \(4\) and increases the genus of the surface by \(2\). The move connects the three components on each side into a single component. In the right part of Figure~\ref{fig:Surgeries.}, two parallel horizontal or vertical lines indicate a Reidemeister~II move. In the remainder of the paper, we perform Reidemeister~I and II moves only when all of these conditions are satisfied, and we call them {\em legal moves}.

In the following sections we will perform multiple Reidemeister I and II moves on the stairs, and we will call a sequence of such moves a {\em scheme of Reidemeister moves}. We note that the order of the moves does not matter as long as we always connect different components.

%We note that if the Reidemeister I move on a intersection point, reduces the number of boundary components by $2$, and the Reidemeister II move reduces the number by $4$. 

\subsection{The attaching graph argument}

To determine the number of boundary components after Reidemeister moves and to avoid connecting the same components, we introduce an {\em attaching graph} $G$, which is motivated by \cite{CM}. 

The vertices of $G$ are distinct boundary components of the stairs $St(n,k)$. When $k$ is even, as shown in Figure \ref{fig:k1n5k3n4_2}, there are one or two outer boundary components so there are one or two vertices representing the outer part (which is different from the figure) and each inner square is a vertex. When $k$ is odd, as shown in Figure \ref{fig:k2n6_2}, there are $n$ outer boundary components so there are $n$ outer vertices (which is also different from the figure) and each inner square is a vertex. Given a scheme of Reidemeister moves, two vertices share an edge if they are connected via a Reidemeister move. We note that a boundary component cannot be connected to itself, and connecting the same two components more than once is an illegal move, so $G$ is a simple graph.

If $G$ can be realised by a scheme of Reidemeister moves, we call $G$ a {\em possible graph}. We note that all possible graphs contain an even number of edges.

\begin{theorem}
 \label{Theorem: tree}
        Given the stairs $St(n,k)$ and a scheme of Reidemeister moves, the resulting surface will achieve the maximum possible genus if and only if the associated attaching graph is possible and is a connected tree or a forest with two connected components.
\end{theorem}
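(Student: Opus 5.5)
Our plan is to reduce the statement to a component count on the attaching graph and then apply the Euler characteristic formula of \cref{proposition: genus of surface for TSL}. Every thickened multicurve produced from $St(n,k)$ by a scheme of Reidemeister moves has the same $n$ curves, and each pair still meets exactly $k$ times. Hence the total number of intersection points stays $i=\tfrac12 kn(n-1)$, and the genus of the associated surface is $g=(i-b+2)/2$, where $b$ is the final number of boundary components. By the proof of Theorem~\ref{thm:lowerbound}, the genus equals the maximum $g_{k,n}$ exactly when $b=1$ or $b=2$. Since $b\equiv i \pmod 2$ by \cref{corollary: both odd or even}, this means $b=1$ when $i$ is odd and $b=2$ when $i$ is even. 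So it suffices to show that $b$ equals the number of connected components of the attaching graph $G$, and that $G$ is acyclic whenever the scheme is realisable.

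\textbf{Step 1: $b$ equals the number of components of $G$.} We would check this directly from Definitions~\ref{def:Reidemeister1} and \ref{def:Reidemeister2}. A legal Reidemeister I move merges two pairs of distinct boundary components, so it contributes two edges to $G$. A legal Reidemeister II move merges two triples, contributing two edges per triple, so four edges. In both cases the move's only effect on the boundary is to fuse the components joined by these edges and to leave every other component unchanged. Inducting on the number of moves, and using the paper's remark that the order of moves is irrelevant, the final boundary components correspond bijectively to the connected components of $G$. If $G$ is not possible, the scheme is not realisable at all, which accounts for the word ``possible'' in the statement.

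\textbf{Step 2: $G$ is a forest.} We count vertices and edges. Let $V$ be the number of vertices of $G$, that is, the number of boundary components of $St(n,k)$, and let $E$ be the number of edges. Every legal move reduces $b$ by exactly the number of edges it adds: two for Reidemeister I, four for Reidemeister II. Hence $b=V-E$. On the other hand, any graph satisfies $\#\{\text{components}\}\ge V-E$, with equality exactly when it is a forest. Combining this with Step~1 forces $G$ to be acyclic. A cycle would mean some move joins two components that are already connected, which is exactly what the legality conditions forbid.

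\textbf{Conclusion and main obstacle.} Given Steps 1 and 2, the genus is maximal if and only if $G$ has one component, so it is a tree, or two components, so it is a forest with two trees. The parity restriction from \cref{corollary: both odd or even} decides which of the two cases occurs. We expect the main obstacle to be making Step~1 rigorous for Reidemeister II. There we must verify that the local surgery joins the three components on each side and does nothing else, and that when the two triples share one component the result is still governed by the edges of $G$ rather than by some hidden identification. We would handle this by tracking the boundary arcs near the two crossings in the local picture of Figure~\ref{fig:Surgeries.}, and by viewing a shared component as a vertex common to both paths, so that connectivity is still computed in $G$.
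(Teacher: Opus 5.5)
Your proposal is correct and takes essentially the paper's route: identify the final boundary components with the connected components of $G$, use legality of every move to rule out cycles, and use Theorem~\ref{thm:lowerbound} (with $i=\tfrac12 kn(n-1)$ unchanged by the moves) to see that maximal genus means exactly $b\in\{1,2\}$. Your count $b=V-E$ is just a quantitative form of the paper's observation that a cycle in $G$ would force some move to join pieces of a boundary component that are already connected, which is illegal. Your parity remark makes precise the paper's comment that the moves change $b$ by even amounts.
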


\begin{proof}
    First we show that if the surface achieves the maximal genus, then the graph is a forest with one or two connected components. Suppose that there is a loop in $G$; then after a sequence of moves there will be a Reidemeister move connecting the same two boundary components, which is illegal. Then we notice that if we connect two vertices they will make a single boundary component after the move, so the number of connected components in $G$ is the same as the number of boundary components of the surface. According to Theorem \ref{thm:lowerbound}, the optimal number is 1 or 2.

    For the converse we notice that if $G$ is possible and is a forest then every Reidemeister move connects distinct boundary components so the moves are all legal. And we notice that Reidemeister moves always change the number of boundary components by an even number,  when the number of connected components in $G$ reaches 1 or 2, the genus will reach its maximum for $St(n,k)$.
\end{proof}

So our goal is to reduce the boundary components to form a forest with one or two connected components. In the following sections, we will give the constructions. It is easy to verify that the attaching graphs satisfy Theorem \ref{Theorem: tree} and we leave this to the reader.

\subsection{Construction when $n\geq6$}\label{sec:construction:ngeq6}

\begin{definition}[Loco and carriage operation]\label{def:basicoperation}
    For a four‑step stair segment, we define a \emph{loco operation} as performing one Reidemeister I move followed by two Reidemeister II moves as illustrated in the red part of Figure \ref{fig:Loco}. For the part to the right of the loco part, we perform consecutive Reidemeister I moves (indicated by crosses) as shown in the blue part of Figure \ref{fig:Loco}, we call this \emph{carriage operation}. These two operations connect the components inside the four-step segment to the outside components.
\end{definition}

        \begin{figure} [H]
        \centering
        \includegraphics[width=0.8\linewidth]{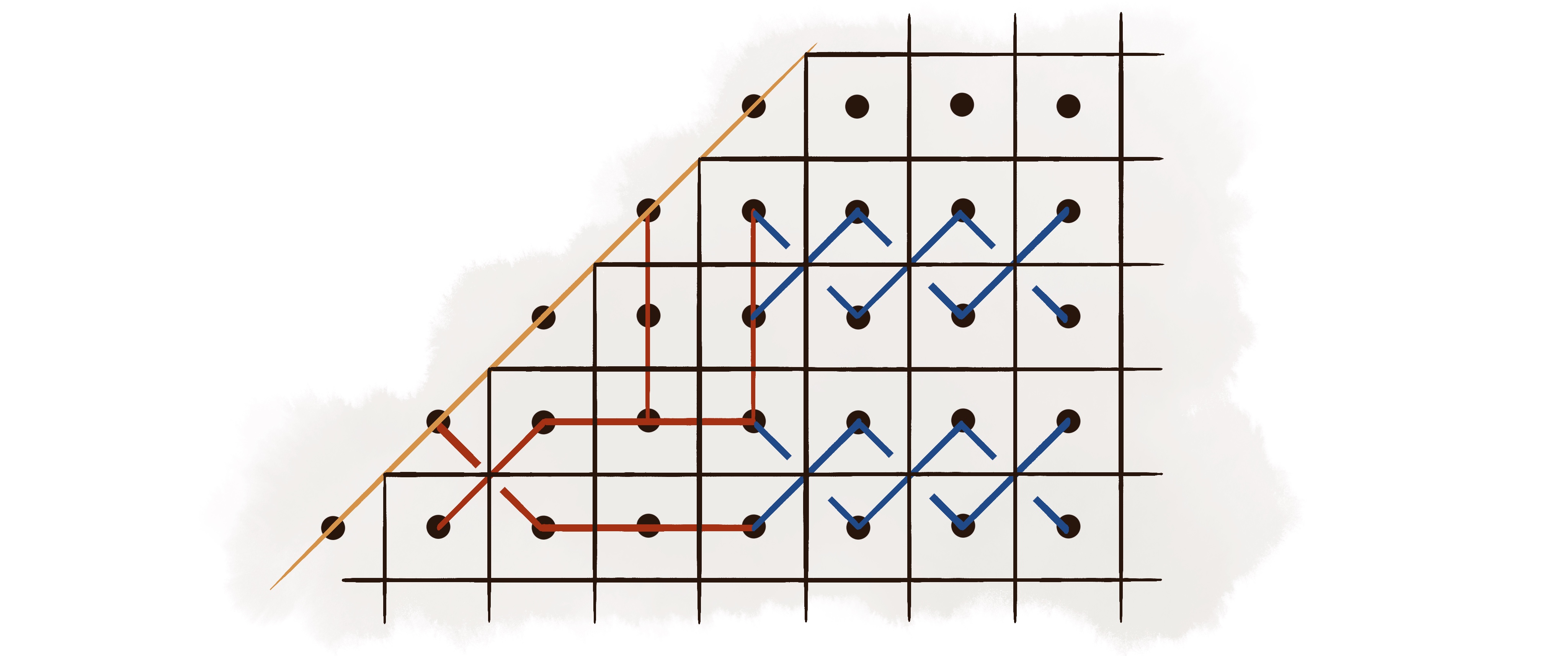}
        \caption{The loco and carriage operation.}
        \label{fig:Loco}
    \end{figure}

\subsubsection{The case when $k=1$}\label{sec:construction:ngeq6_1}

For \(k=1\), we group the rows from bottom to top in blocks of four. In each four-row block, the leftmost four-step stair segment is constructed using the loco and carriage operation defined in \cref{def:basicoperation}.

At the top, at most three rows of small squares remain untreated. Depending on the residue of \(n\) modulo \(4\), four cases are distinguished. In each case, applying suitable Reidemeister I and Reidemeister II moves reduces the number of boundary components to either one or two, see Figure \ref{fig:k1n01} and Figure \ref{fig:k1n23}.

Specifically, after performing the above operations on every four-row block, we have the following results(which define \emph{Construction I}):

\begin{enumerate}
    \item If \(n \equiv 0 \pmod{4}\), there remain three small squares at the top. Applying a single Reidemeister~I move connects two of these squares to the outside component, leaving only one small square and the outside component—hence a total of two boundary components.
    \item If \(n \equiv 1 \pmod{4}\), there remain six small squares at the top. Applying a single Reidemeister~I move and a single Reidemeister~II move connects five of these squares to the outside component, leaving only one small square and the outside component—hence a total of two boundary components.
    \item If \(n \equiv 2 \pmod{4}\), then after performing the above operations on every four-row block, the thickened multicurve already has exactly one boundary component, and no additional moves are required.
    \item If \(n \equiv 3 \pmod{4}\), then after performing the operations on the lower \(\frac{n-7}{4}\) four-row blocks, there remains a five-step stair segment consisting of 15 small squares at the top. Applying four Reidemeister~I moves and two Reidemeister~II moves reduces the thickened multicurve to a single boundary component. Note that the rightmost Reidemeister~I move joins the two outer boundary components of the stairs.
\end{enumerate}

    \begin{figure} [H]
        \centering
        \includegraphics[width=1\linewidth]{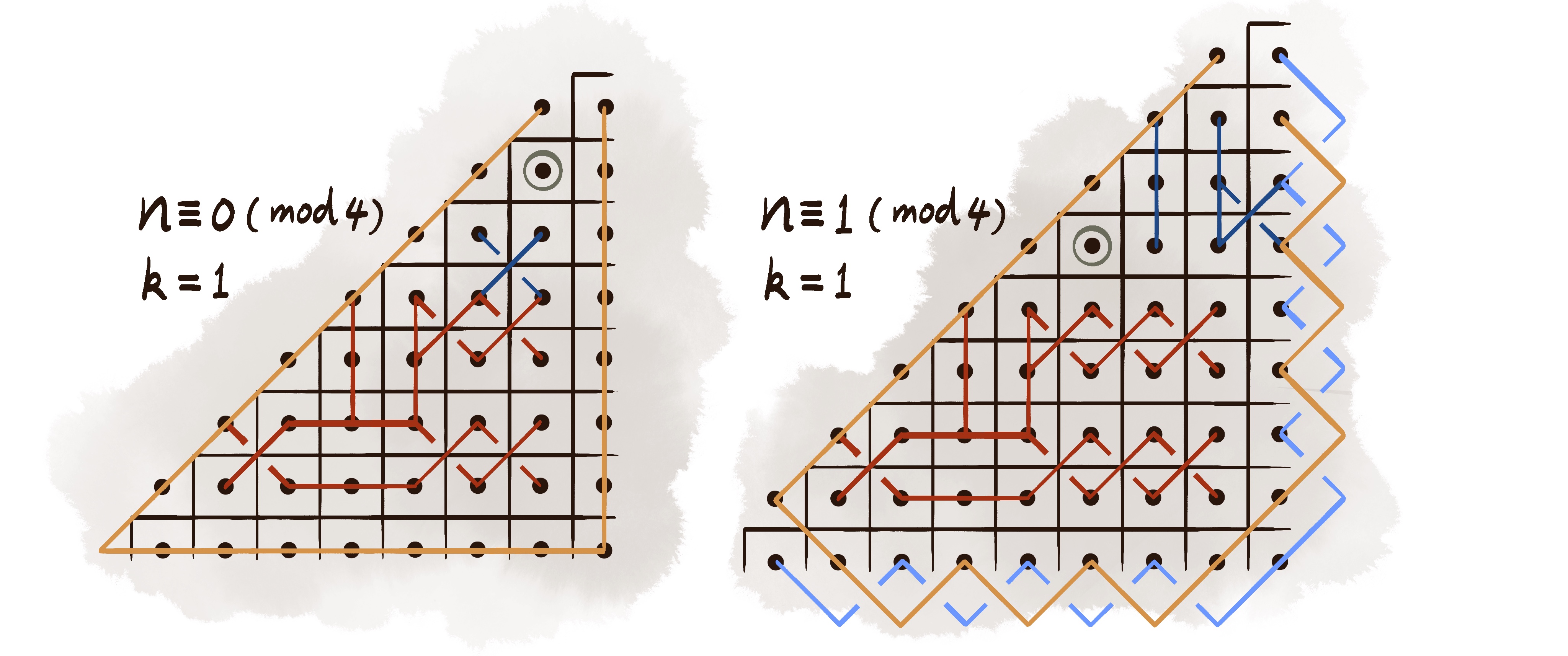}
        \caption{$k=1$, $n \equiv 0 \pmod{4}$ and $k=1$,  $n \equiv 1 \pmod{4}$.}
        \label{fig:k1n01}
    \end{figure}

    \begin{figure} [H]
        \centering
        \includegraphics[width=1\linewidth]{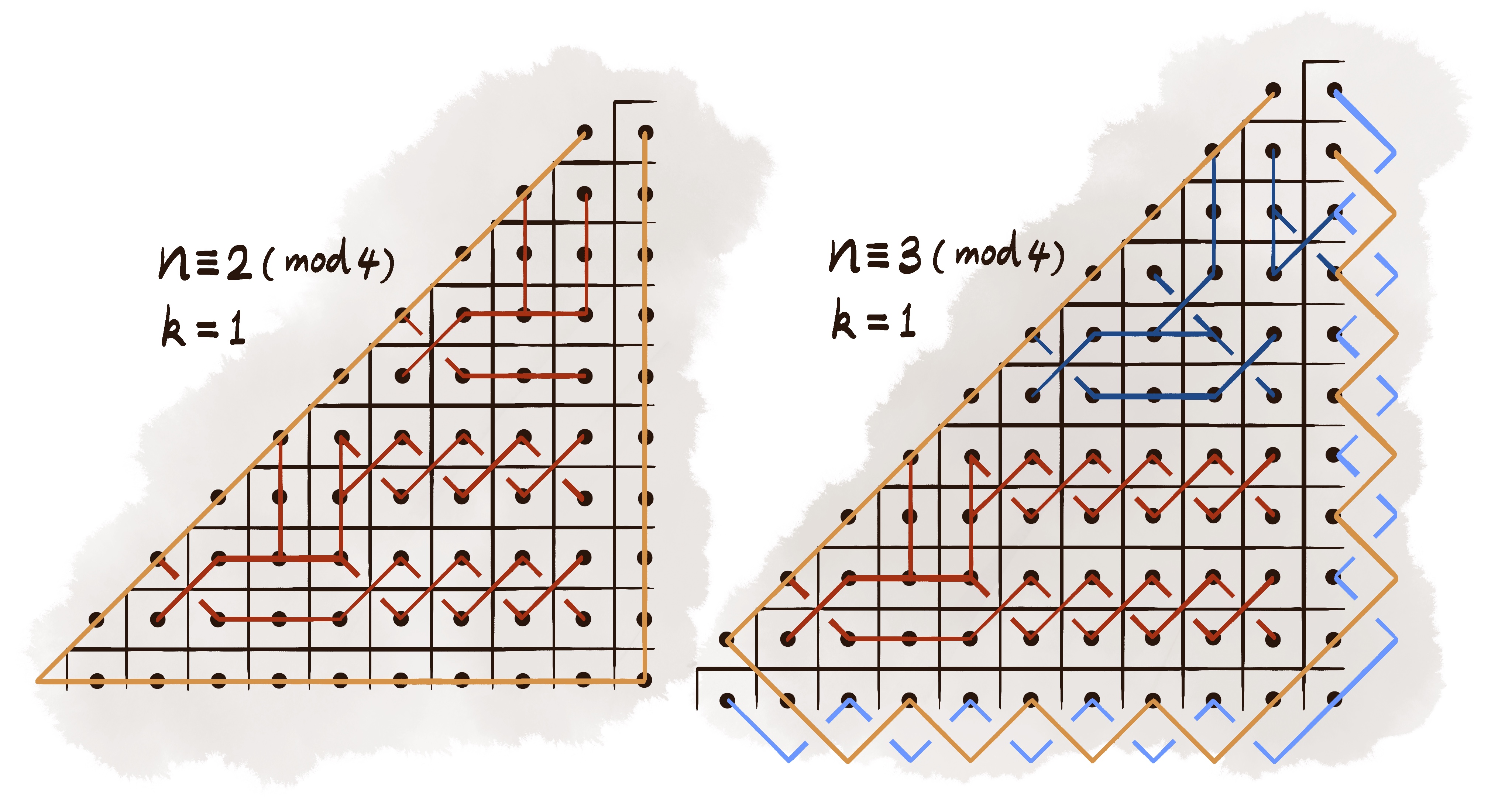}
        \caption{$k=1$, $n \equiv 2 \pmod{4}$ and $k=1$,  $n \equiv 3 \pmod{4}$.}
        \label{fig:k1n23}
    \end{figure}

\subsubsection{The case when $k\geq2$}\label{sec:construction:ngeq6_2}

When $k \geq 3$ is odd, the construction consists of one Construction~I (as defined for $k = 1$) followed below by $\frac{k-1}{2}$ copies of \emph{Construction~II} on the basic block $St(n,2)$ (defined below). When $k\geq2$ is even, we use $\frac{k}{2}$ copies of Construction~II on the basic block $St(n,2)$ (in the case $k \equiv 2 \pmod{4}$ a more specific construction is required). In either case, we only need to focus on Construction~II.

\begin{enumerate}
    \item $n \equiv 1 \pmod{4}$

\begin{comment}
    When \(k=3\), we group the small squares of \(St(n,3)\) into four-row blocks, obtaining \(\frac{n-1}{2}\) such blocks.  
    The lower \(\frac{n-1}{4}\) four-row blocks are parallelograms. In each of them, the left and right four-step stair segments are handled using the \emph{loco operation} defined in \cref{def:basicoperation}, while the middle part is processed by the \emph{carriage operation} introduced at the beginning of Section~4.2.1.  
    For the upper \(\frac{n-1}{4}\) four-row blocks, we adopt a construction analogous to the case \(k=1\): the left four-step segment uses the loco operation, and the remaining part on the right uses the carriage operation.  
    After these procedures, the thickened multicurve has its number of boundary components reduced to two. This is defined as \emph{Construction I}.
\end{comment}

    When \(k=2\), consider the parallelogram-shaped grid \(St(n,2)\), the grid is called the \emph{basic block}. Divide all small squares into \(\frac{n-1}{4}\) four-row blocks.

    \begin{figure} [H]
        \centering
        \includegraphics[width=0.9\linewidth]{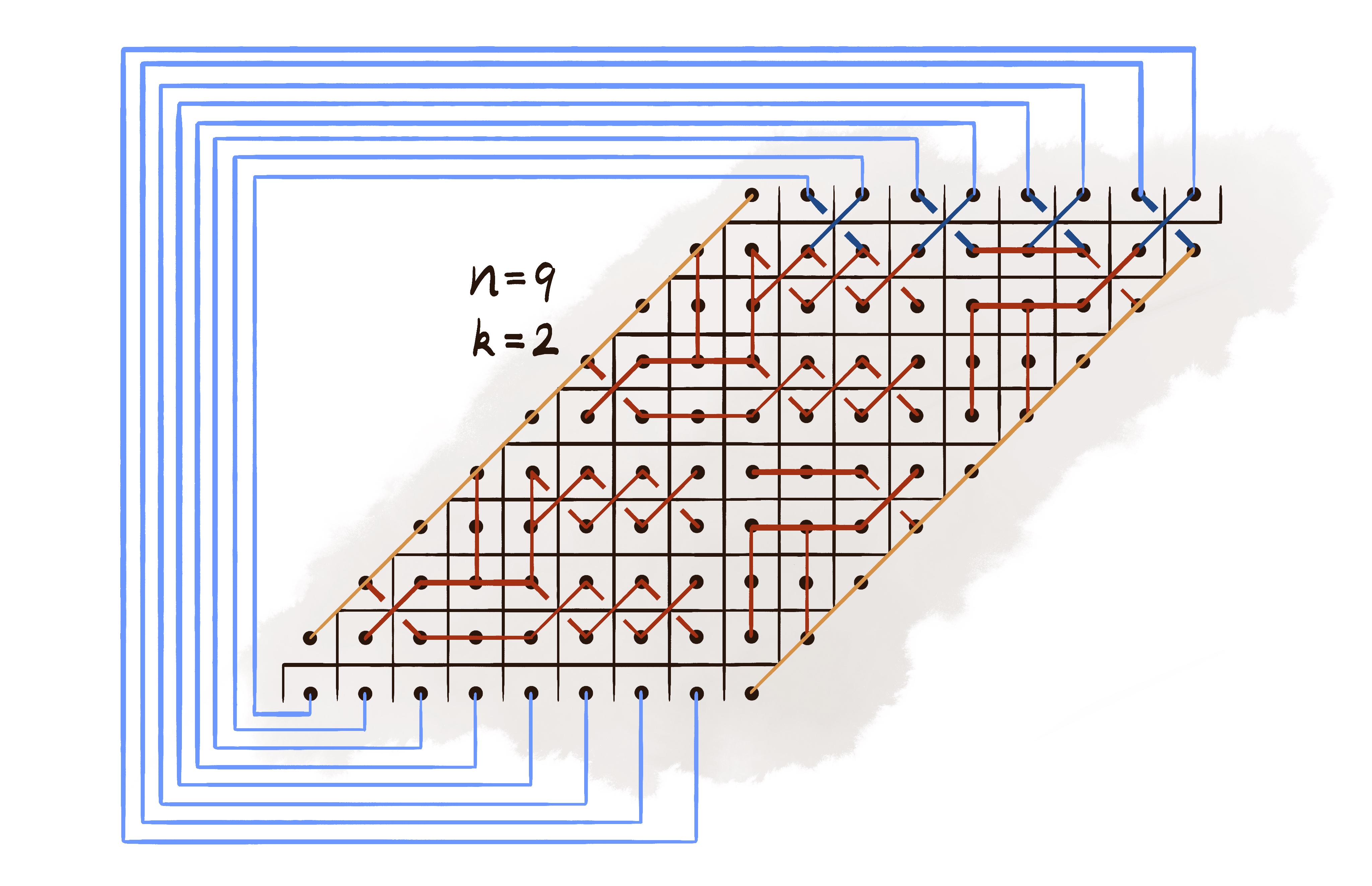}
        \caption{$k=2$, $n = 9$.}
        \label{fig:k2n9}
    \end{figure}

    In each block, the leftmost and rightmost four‑step stair segments are treated with the \emph{loco operation}, and the middle part is handled by the \emph{carriage operation}. The loco operations together with the carriage operations performed within a four-row block are called a \emph{basic construction}.

     On top of this, perform \(\frac{n-1}{2}\) Reidemeister~I moves between the uppermost row of squares and the row immediately above it; see Figure \ref{fig:k2n9}. We call this foundational construction \emph{Construction~II} ($n \equiv 1 \pmod{4}$).

    \begin{figure} [H]
        \centering
        \includegraphics[width=0.9\linewidth]{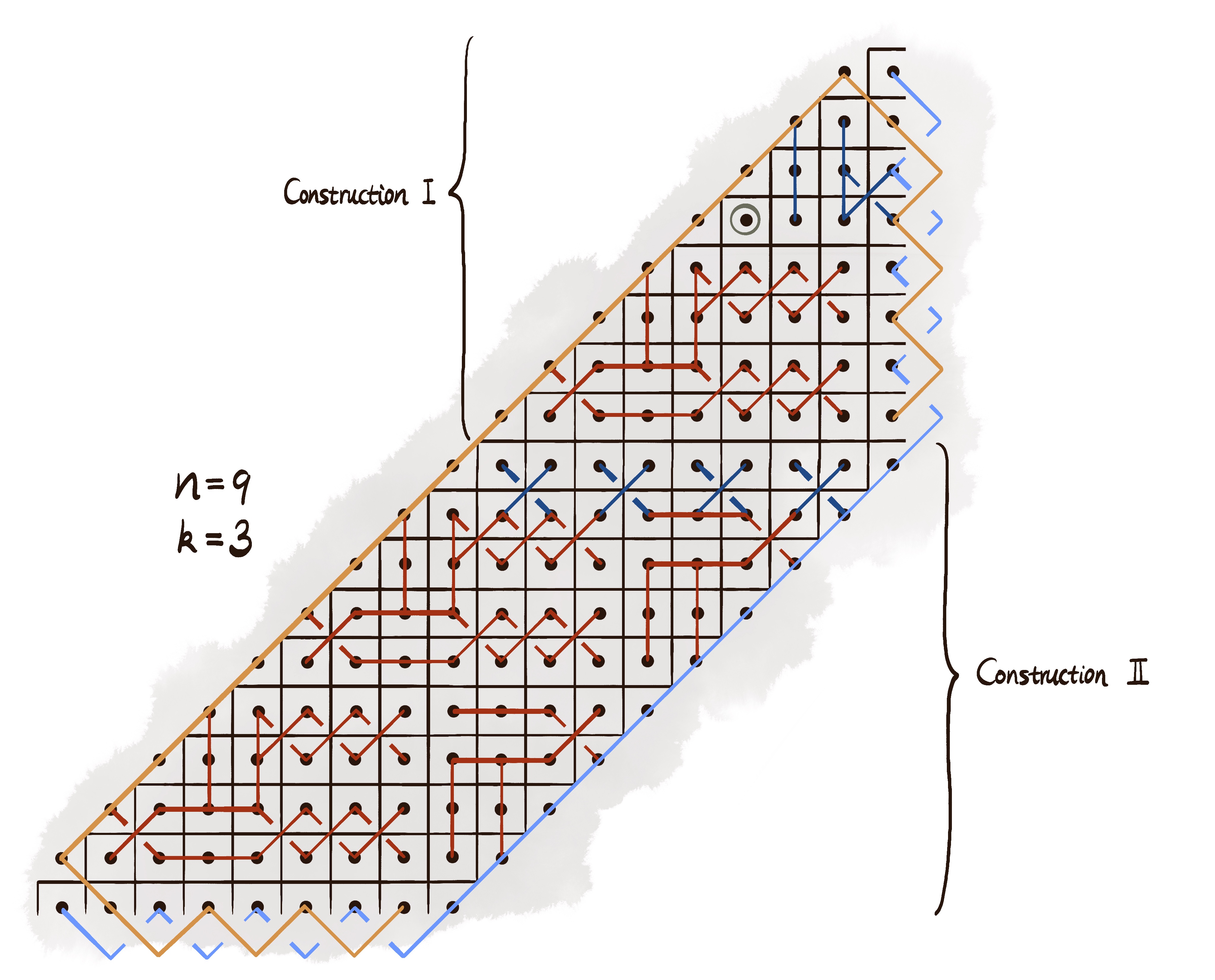}
        \caption{$k=3$, $n = 9$.}
        \label{fig:k3n9}
    \end{figure}

    When \(k\) is an odd integer with \(k \ge 3\), we assemble \(\frac{k-1}{2}\) copies of Construction~II sequentially below a Construction~I. This provides a procedure that, starting from \(St(n,k)\), applies a series of Reidemeister~I and Reidemeister~II moves and yields a thickened \(k\)-system with exactly two boundary components.

    The method of attaching Construction~II below the existing structure is the same in the remaining three cases, so we will only discuss the construction of Construction~II in what follows.

    \item $n \equiv 2 \pmod{4}$

    When \(k = 4\), consider the parallelogram-shaped grid \(St(n,4)\). Divide all small squares into \(\frac{3(n-2)}{4}\) four-row blocks; the row immediately above the top row of small squares also belongs to the uppermost four-row block. In each block, the leftmost and rightmost four-step stair segments are treated with the \emph{loco operation}, and the middle part is handled by the \emph{carriage operation}. We call this foundational construction \emph{Construction~II} when $n \equiv 2 \pmod{4}$. Note that this is different from the case when $n \equiv 0,1,3 \pmod{4}$, since we need to construct according to the residue of $k$ modulo $4$ on a case‑by‑case basis.

\begin{itemize}
    \item $k$ is odd. When \(k \equiv 1 \pmod{4}\), we assemble \(\frac{k-1}{4}\) copies of Construction~II sequentially below a Construction~I. This provides a procedure that, starting from \(St(n,k)\), applies a series of Reidemeister~I and Reidemeister~II moves and yields a thickened \(k\)-system with exactly two boundary components.

    \begin{figure} [H]
        \centering
        \includegraphics[width=1\linewidth]{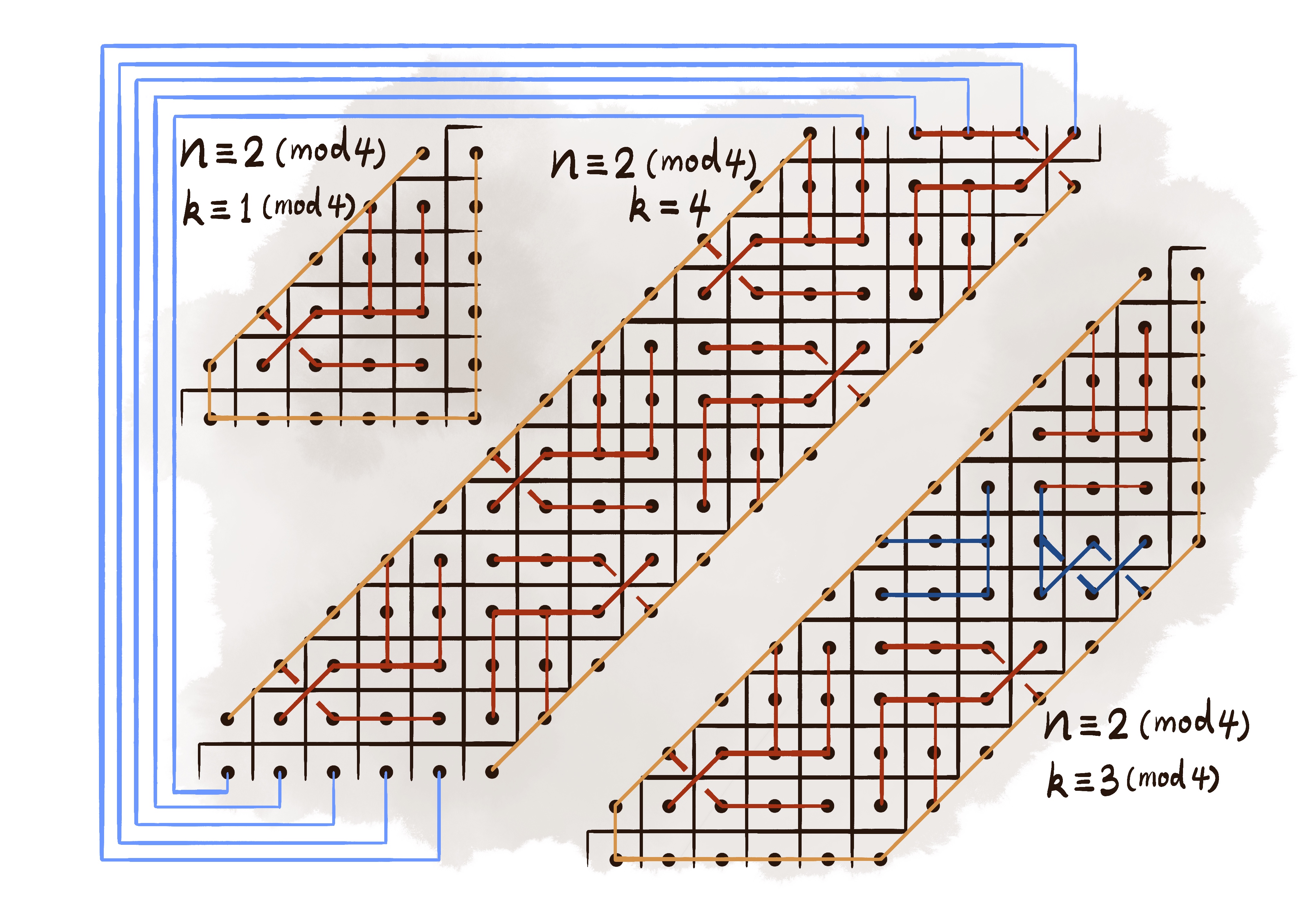}
        \caption{$n \equiv 2 \pmod{4}$.}
        \label{fig:n2}
    \end{figure}

    \item When \(k \equiv 3 \pmod{4}\), we assemble \(\frac{k-3}{4}\) copies of Construction~II sequentially below a Construction~I. Figure \ref{fig:k3n14} represents Construction I for $n=14$ and $k=3$. 
    
    Here “Construction~I” refers to the following: for \(St(n,3)\), take its lower \(\frac{n-2}{4}\) four-row blocks. In each block, the leftmost and rightmost four-step stair segments are treated with the loco operation, and the middle part is handled by the carriage operation. In each of the uppermost \(\frac{n-6}{4}\) four-row blocks, the leftmost four-step stair segment is constructed using the loco operation, and the part to its right is constructed using the carriage operation.
    
    This leaves six rows in the middle. In the bottom two of these rows, apply Reidemeister~I moves and then perform one vertical Reidemeister~II move and one horizontal Reidemeister~II move on the left side(see the blue Reidemeister moves in Figure \ref{fig:k3n14}). In the upper four rows, perform two Reidemeister~II moves and then  apply the carriage operation to the right side of these four rows.

    \begin{figure} [H]
        \centering
        \includegraphics[width=1\linewidth]{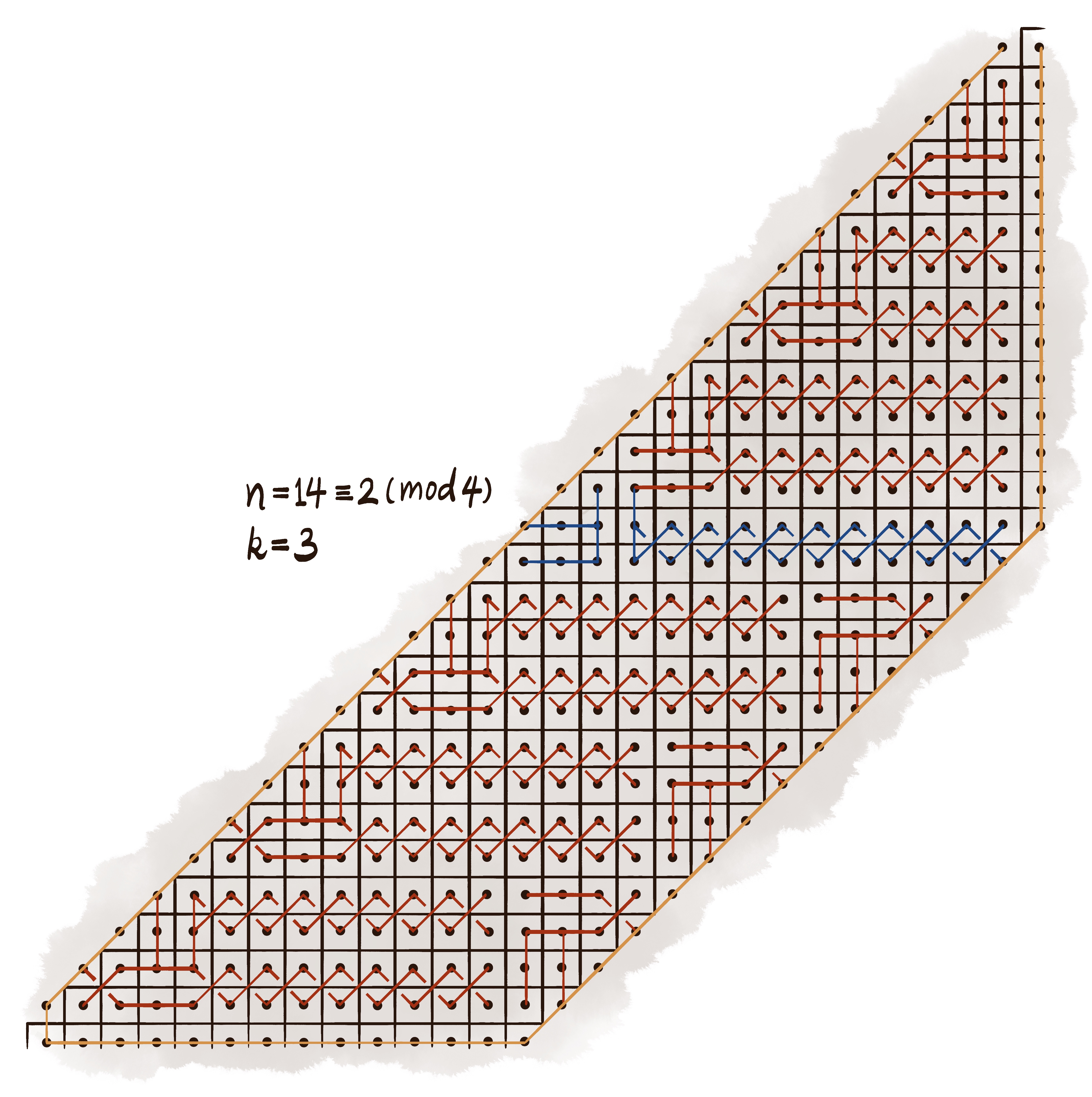}
        \caption{$k=3,n = 14 \equiv 2 \pmod{4}$.}
        \label{fig:k3n14}
    \end{figure}

     \item $k$ is even. When \(k \equiv 0 \pmod{4}\), we assemble \(\frac{k}{4}\) copies of Construction~II.
     
     When $k = 2$, we take the bottom \(\frac{n-2}{4}\) four-row blocks and perform the basic construction in each of them, and then apply Reidemeister~I moves to the top two rows. In this case, the squares of $St(n,2)$ are connected into two components. One component connects the leftmost and rightmost boundary components via the Reidemeister~I moves in the top two rows, while the other component lies entirely within the top two rows. See Figure \ref{fig:k2n10}.

    \begin{figure} [H]
        \centering
        \includegraphics[width=1\linewidth]{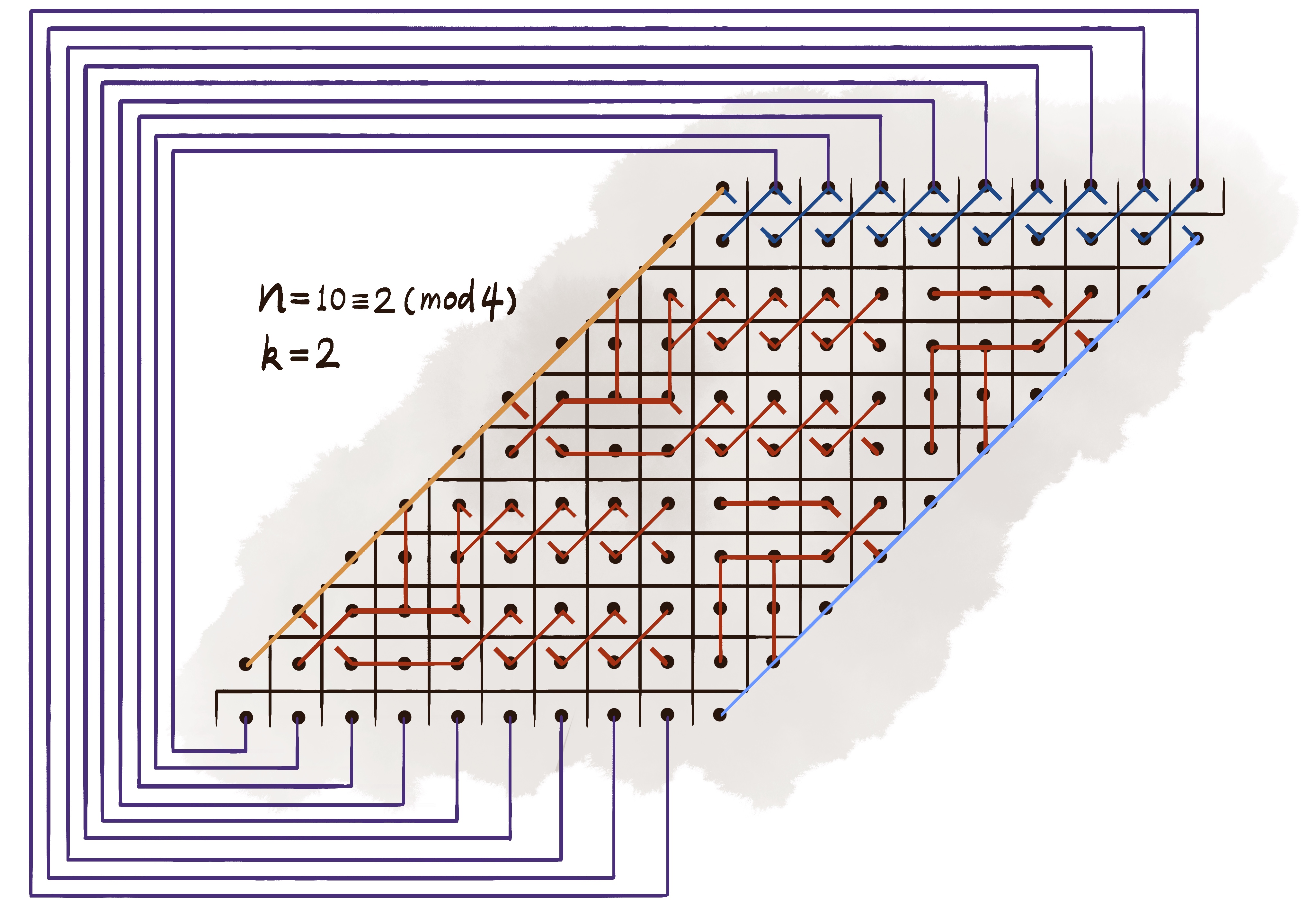}
        \caption{$k=2$, $n = 10 \equiv 0 \pmod{4}$.}
        \label{fig:k2n10}
    \end{figure}
     
     When \(k \equiv 2 \pmod{4}\), we assemble \(\frac{k-2}{4}\) copies of Construction~II at the bottom of the previous construction in $St(n,2)$.
\end{itemize}
    
    \item $n \equiv 3 \pmod{4}$

    When \(k = 2\), consider the parallelogram-shaped grid \(St(n,2)\). Divide the small squares into \(\frac{n-3}{4}\) four-row blocks at the bottom, leaving the top two rows of the small squares. In each lower block, the leftmost and rightmost four-step stair segments are treated with the \emph{loco operation}, and the middle part is handled by the \emph{carriage operation}. On the top two rows, perform $n-1$ Reidemeister~I moves between the uppermost two rows of squares, and connect the squares in the row immediately above them to the uppermost row by $\frac{n-1}{2}$ Reidemeister~I moves, as illustrated in Figure~\ref{fig:k1n01}. We call this foundational construction \emph{Construction~II} ($n \equiv 3 \pmod{4}$).

    \item $n \equiv 0 \pmod{4}$

    When \(k = 2\), consider the parallelogram-shaped grid \(St(n,2)\). Divide all small squares into \(\frac{n}{4}\) four-row blocks; note that the row immediately above the top row of small squares is also included in the uppermost four-row block. In each block, the leftmost and rightmost four-step stair segments are treated with the \emph{loco operation}, and the middle part is handled by the \emph{carriage operation}. We call this foundational construction \emph{Construction~II} when $n \equiv 0 \pmod{4}$.

    \begin{figure} [H]
        \centering
        \includegraphics[width=1\linewidth]{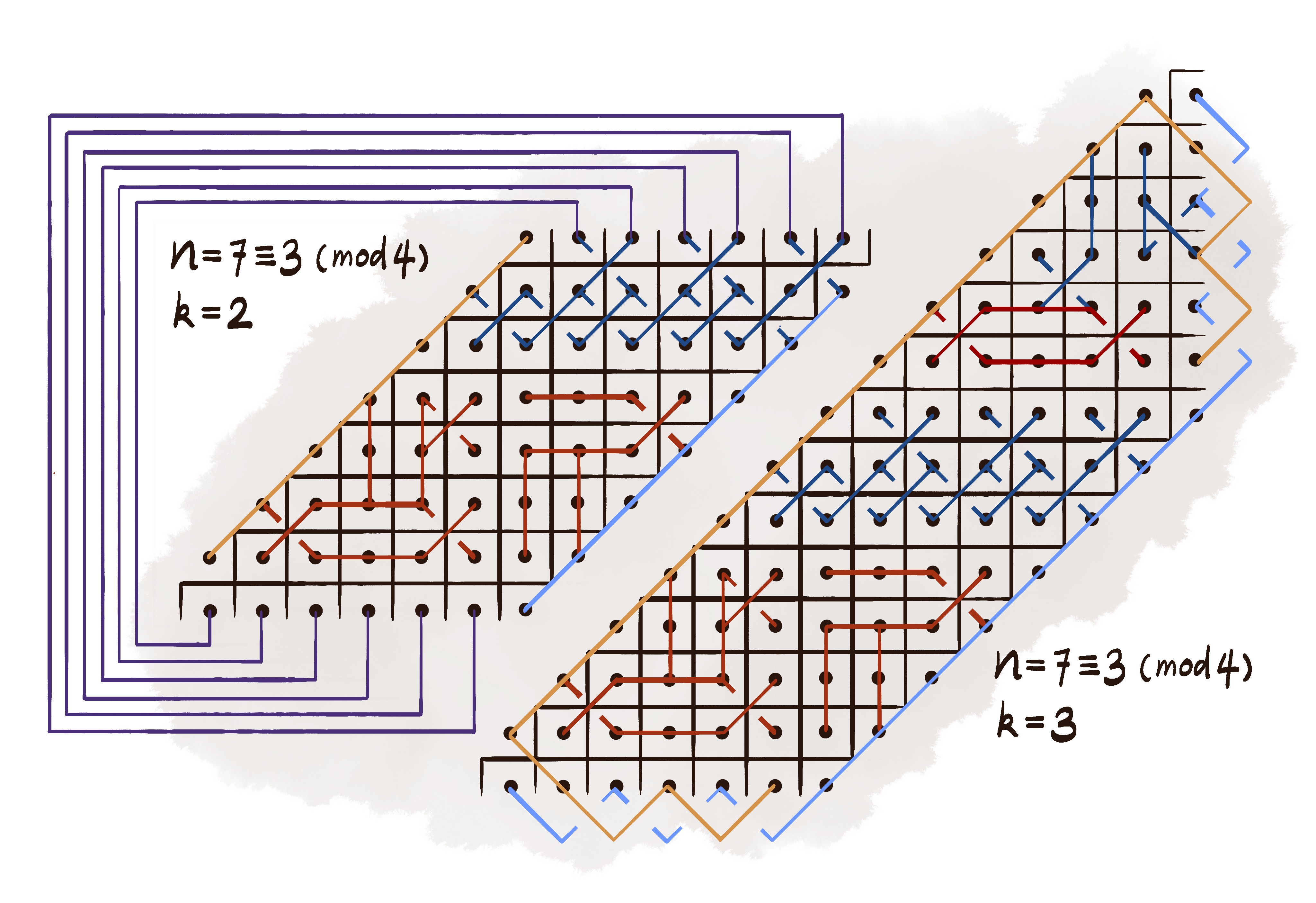}
        \caption{$k=2$, $n = 7 \equiv 3 \pmod{4}$ and $k=3$,  $n = 7 \equiv 3 \pmod{4}$.}
        \label{fig:k1n01}
    \end{figure}

    \begin{figure} [H]
        \centering
        \includegraphics[width=1\linewidth]{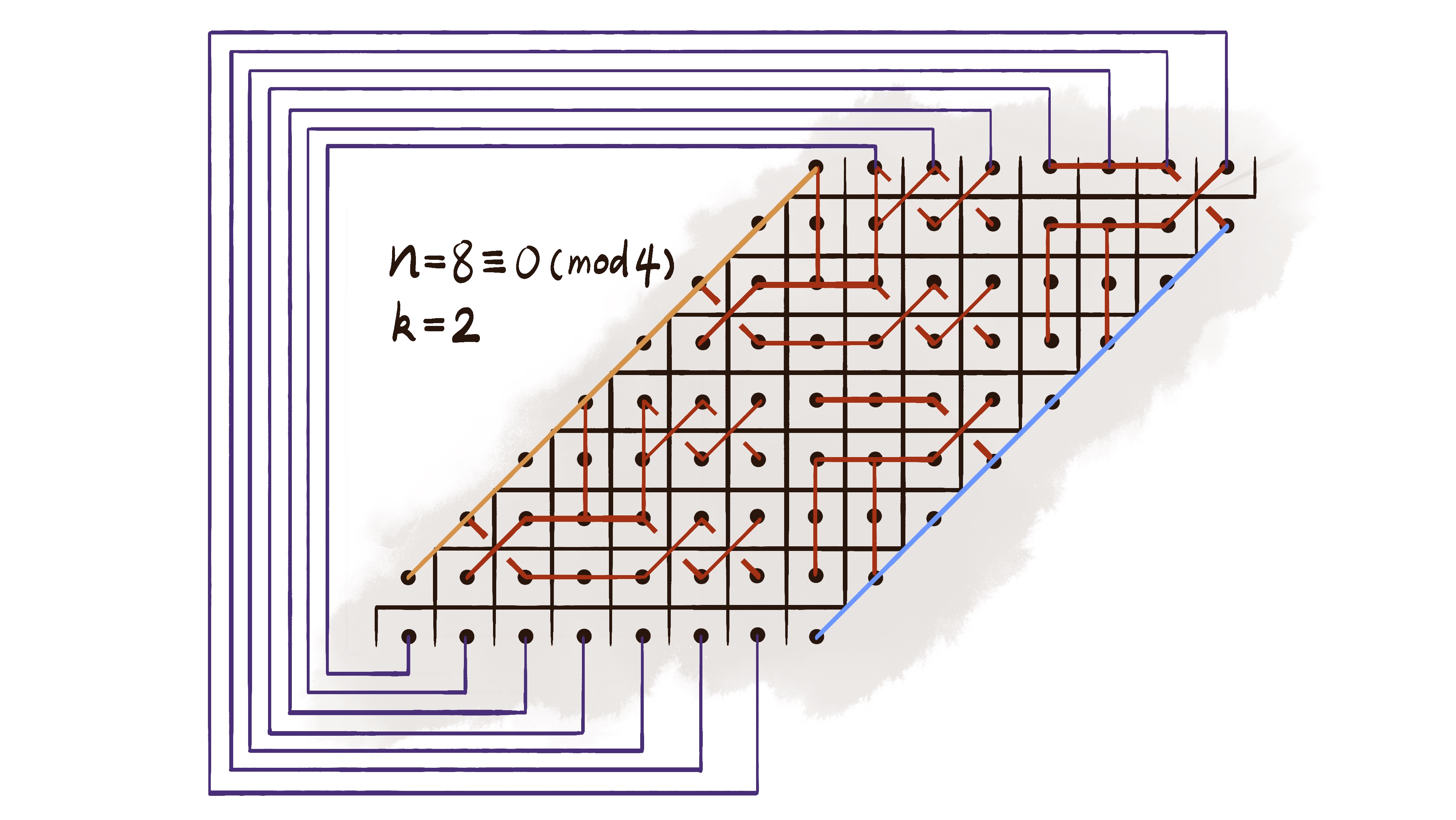}
        \caption{$k=2$, $n = 8 \equiv 0 \pmod{4}$.}
        \label{fig:k2n8}
    \end{figure}
    
\end{enumerate}

\subsection{Construction when $n=3,4,5$}\label{sec:construction:nleq5}

\begin{enumerate}
    \item $n=5$.

     We distinguish two cases depending on whether $k$ is odd or even. 

    \begin{itemize}
        \item $k$ is odd. First, from bottom to top, we take every two rows as a unit and perform two Reidemeister~II moves in each unit. 
        
        When \(k \equiv 3 \pmod{4}\), there remain 10 small squares at the top, forming a four-step stair segment. Applying one loco construction reduces the thickened multicurve to two boundary components. 
        
        When \(k \equiv 1 \pmod{4}\), there remain 6 small squares at the top, forming a three-step stair segment. Performing one Reidemeister~I move and one Reidemeister~II move then reduces the thickened multicurve to two boundary components: one component is a single small square, and the other is the outer part. See Figure \ref{fig:k35n5}.
    
    \begin{figure} [H]
        \centering
        \includegraphics[width=1\linewidth]{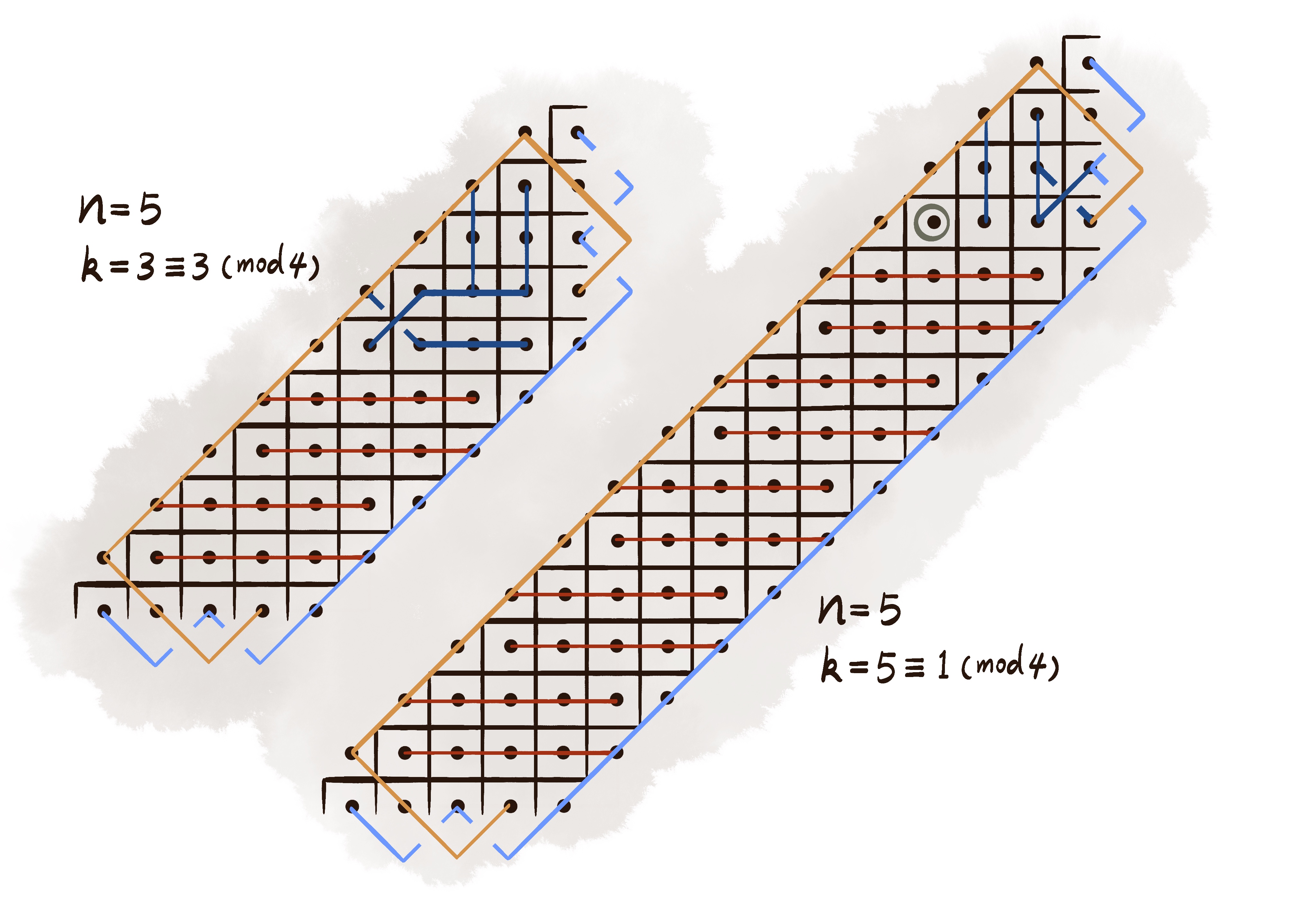}
        \caption{$k = 3 \equiv 3 \pmod{4}$, $n = 5$ and $k = 5 \equiv 1 \pmod{4}$, $n = 5$.}
        \label{fig:k35n5}
    \end{figure}

    \item $k$ is even. We perform two Reidemeister~I moves and four Reidemeister~II moves in $St(5,2)$ as follows to complete the construction when $k=2$, and then concatenate $\frac{k}{2}$ copies of this construction.

    \item $n=4$.

    First, from bottom to top, we take every two rows as a unit and perform a Reidemeister~II move in the middle part of each unit. In the lowest unit we execute two Reidemeister~I moves, while in each of the remaining units we perform one Reidemeister~I move. See Figure \ref{fig:k35n4}.
    
    \begin{itemize}
        \item When $k$ is odd, the resulting thickened multicurve then has two boundary components: one component is a single small square, and the other is the outer part.

        \item When $k$ is even, we add two Reidemeister~I moves on the upper-right side of the resulting thickened multicurve, which then has two boundary components: one component connects three small squares at the top, and the other is the outer part.
    \end{itemize}

    \begin{figure} [H]
        \centering
        \includegraphics[width=1\linewidth]{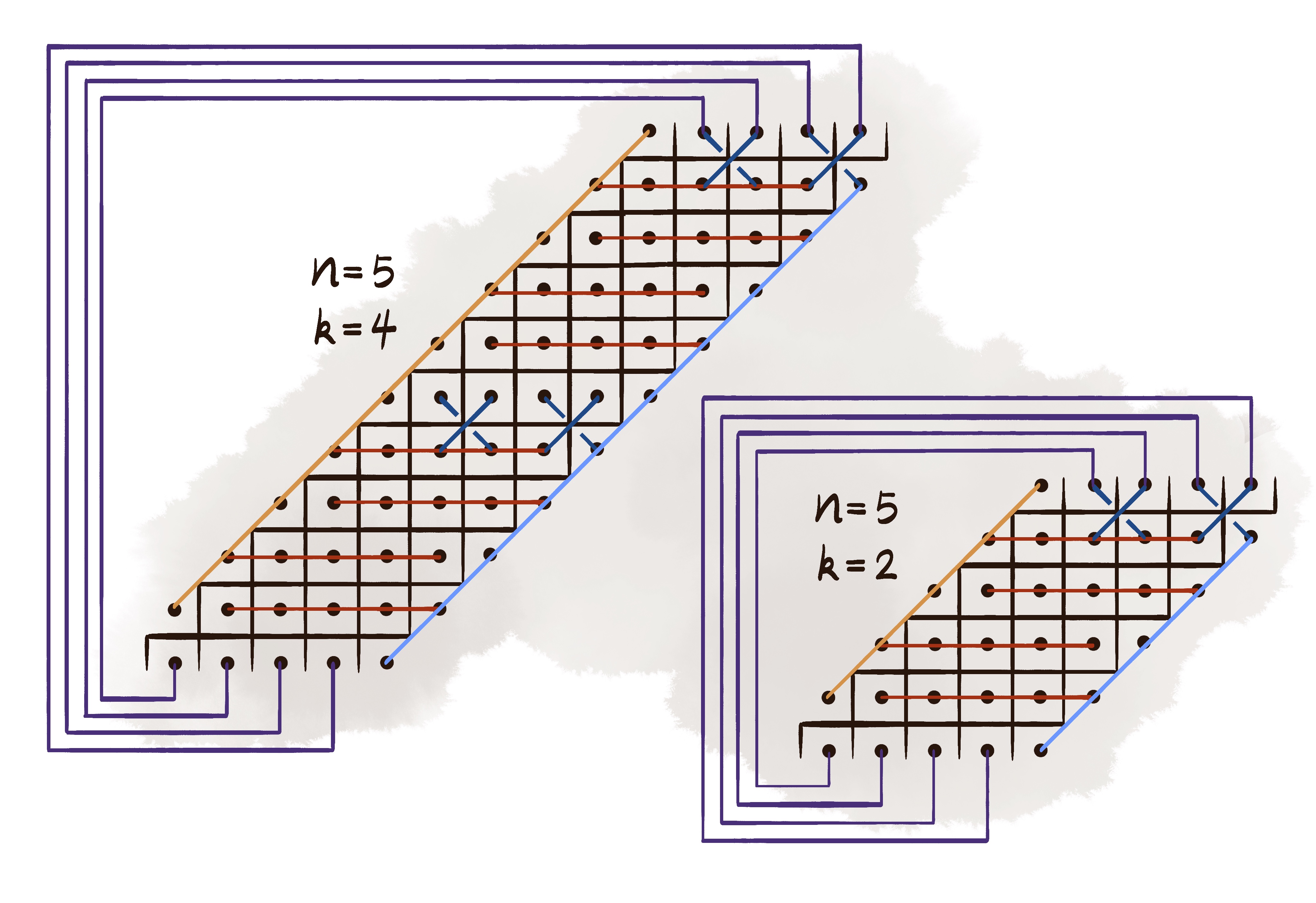}
        \caption{$k = 2$, $n = 5$ and $k = 4$, $n = 5$.}
        \label{fig:k24n5}
    \end{figure}
    
    \end{itemize}

    \begin{figure} [H]
        \centering
        \includegraphics[width=1\linewidth]{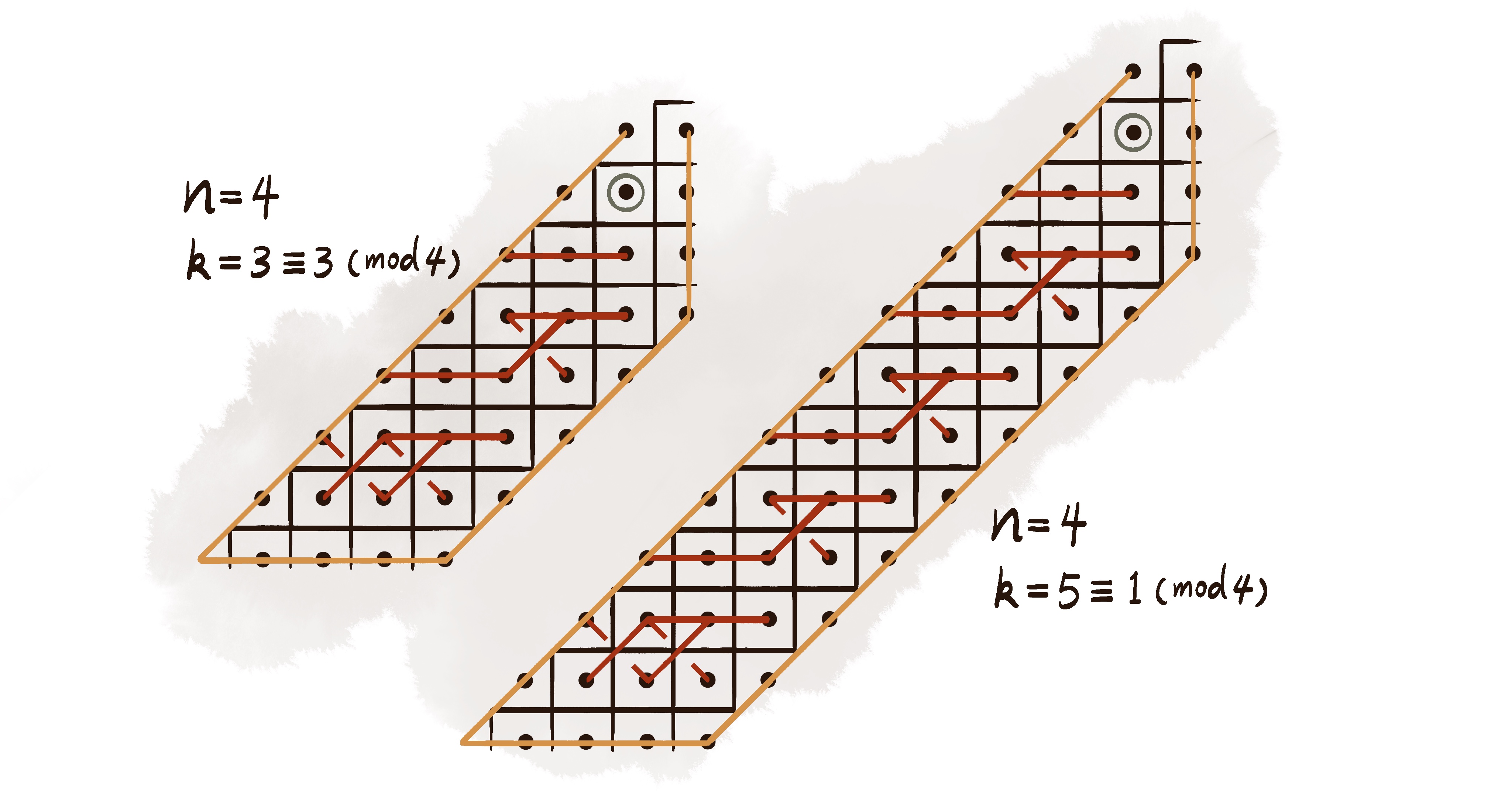}
        \caption{$k = 3 \equiv 3 \pmod{4}$, $n = 4$ and $k = 5 \equiv 1 \pmod{4}$, $n = 4$.}
        \label{fig:k35n4}
    \end{figure}

    \item $n=3$ and $k\neq1$ is odd.

    In this case, each row contains two small squares. At the lower endpoint of the common edge of the two squares in each row except the top two rows, we perform a Reidemeister~I move. For the top two rows, we carry out one Reidemeister~II move. The resulting thickened multicurve then has three boundary components.

    In the final step we carry out the following \emph{special operation} to reduce these three boundary components to one.  

    Let the uppermost intersection point of the stairs be \(P\). We remove the intersection \(P\) and then reinsert it in a different position. After removing \(P\), the thickened multicurve near \(P\) changes from a crossing into two disjoint strips. At this stage the thickened multicurve has two boundary components, coloured red and blue in the middle part of Figure \ref{fig:k3n3}.  

    %Traversing the blue component once in order, we pass successively through points \(1,2,3,4,5,6\). Points \(1,2,5,6\) lie in the middle of the region between the top two rows where the Reidemeister~II move was performed. Points \(3,4\) are located near the former position of \(P\); opposite to them lie points \(A\) and \(B\) on the red component.

    \begin{figure} [H]
        \centering
        \includegraphics[width=1\linewidth]{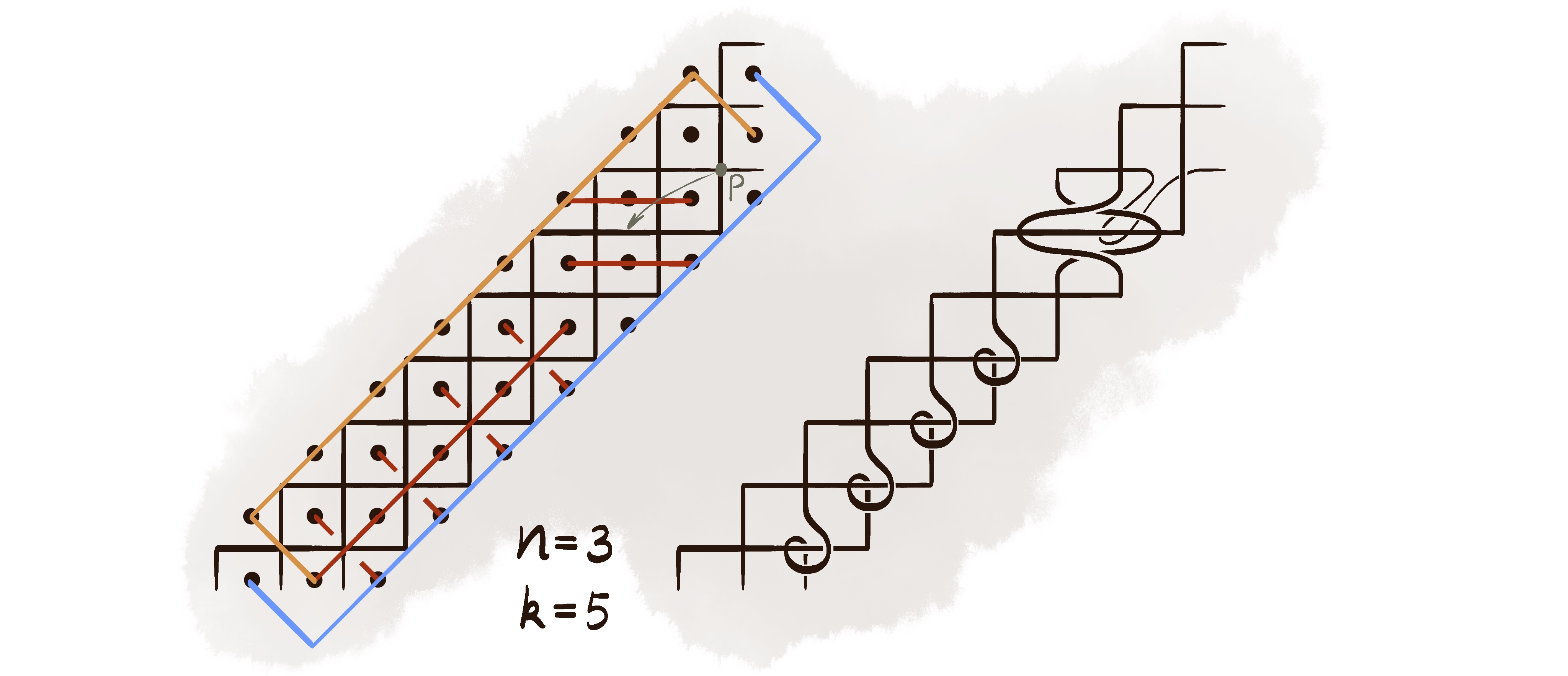}
        \caption{$k = 5$, $n = 3$.}
        \label{fig:k5n3}
    \end{figure}

    We now reconnect the previously removed intersection \(P\) to the region between the top two rows where the Reidemeister~II move was performed(while simultaneously performing the corresponding plumbing surgery on the thickened multicurve). We call this \emph{special operation}.
    
    We prove that this connects the two boundary components of the thickened multicurve into a single component. Before the special operation, the thickened multicurve has two boundary components, coloured red and blue. Traversing the red component, we encounter the points \(A,...,H\) in order; traversing the blue component, we encounter \(a,b,c,d\) in order; see the left part of Figure~\ref{fig:k3n3}. After the special operation, the thickened multicurve has a single boundary component, traversed in the order \(a,b,c, D,E, H, A, B, C,F, G, d\); see the right part of Figure~\ref{fig:k3n3}.

    \begin{figure} [H]
        \centering
        \includegraphics[width=1\linewidth]{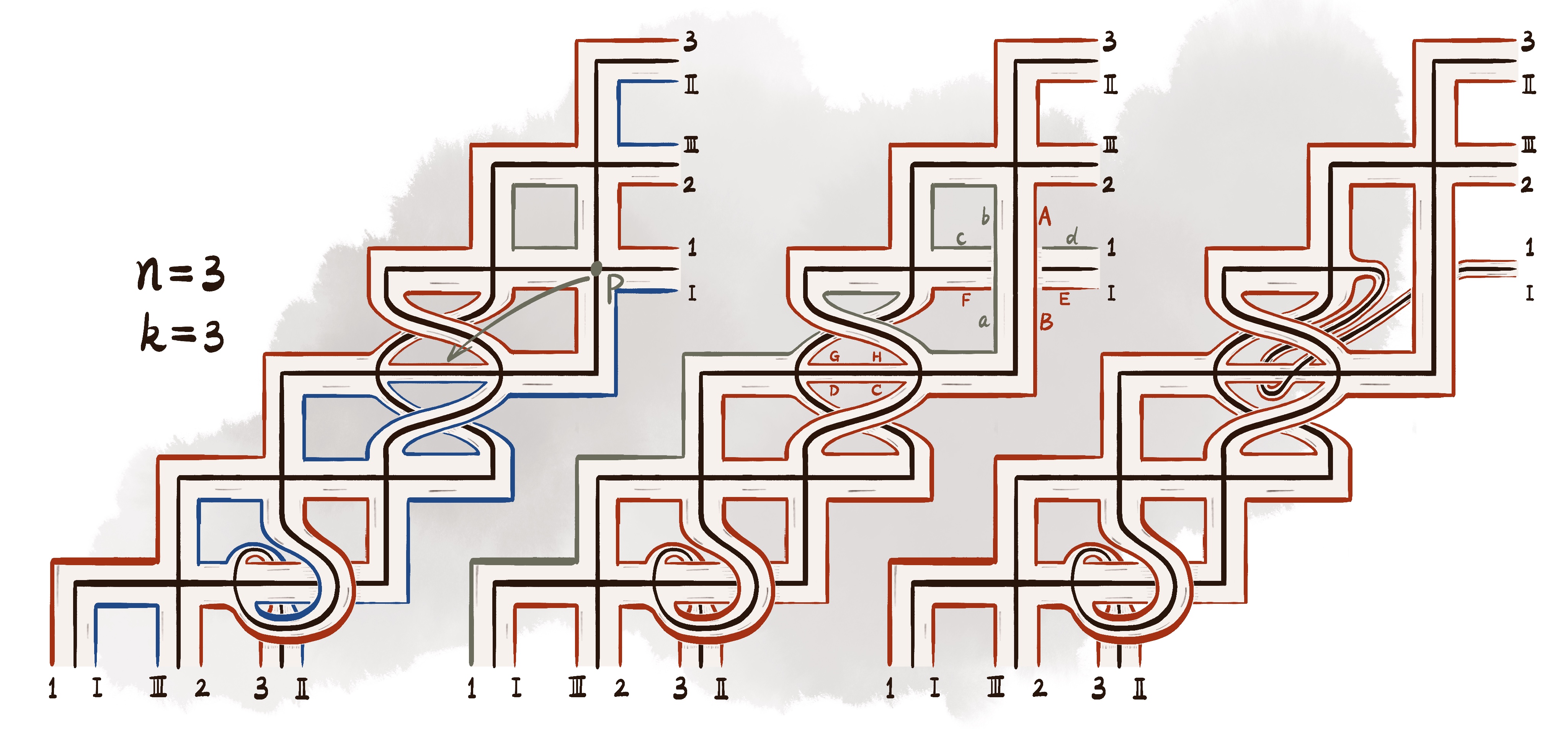}
        \caption{$k = 3$, $n = 3$.}
        \label{fig:k3n3}
    \end{figure}

    \item $n=3$ and $k$ is even.

    When $k = 2$, we perform one Reidemeister~I move and one Reidemeister~II move on $St(3,2)$, and the resulting multicurve has two boundary components. When $k \geq 4$, we concatenate $\frac{k}{2}$ copies of this construction.

    \begin{figure} [H]
        \centering
        \includegraphics[width=1\linewidth]{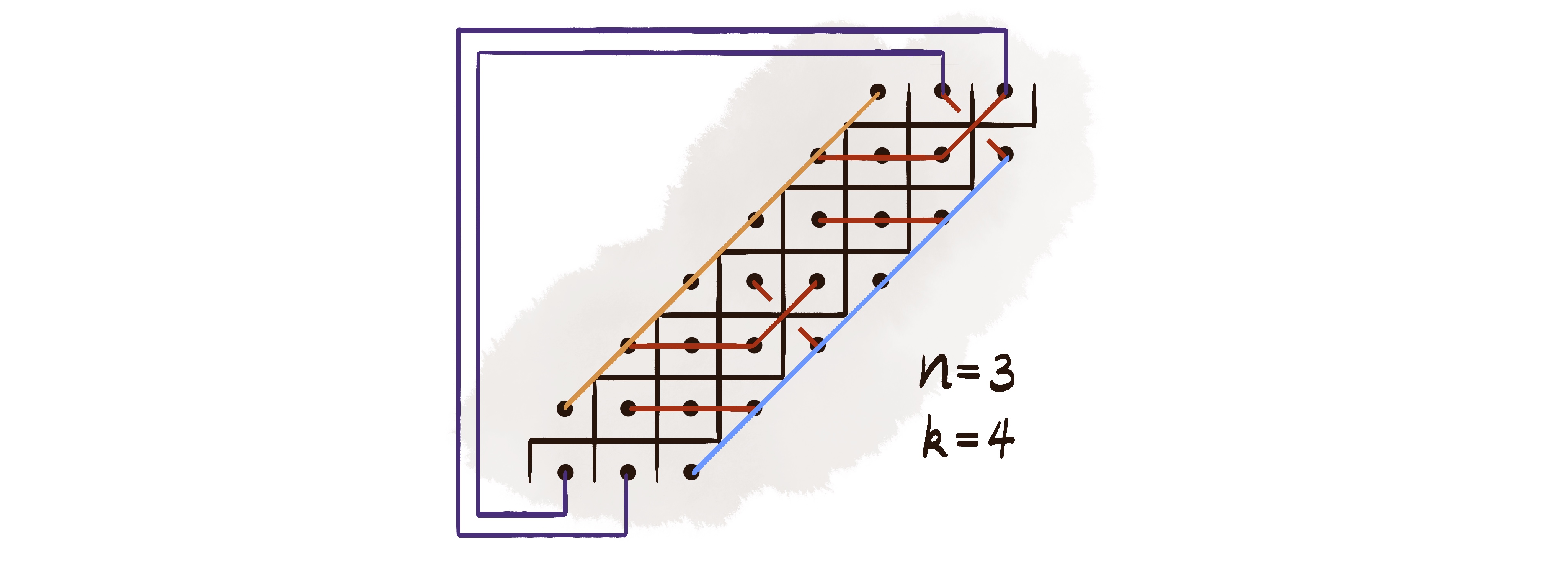}
        \caption{$k = 4$, $n = 3$.}
        \label{fig:k4n3}
    \end{figure}

     \item $n=3$ and $k=1$.

     In this case, after plumbing two annuli, we need to perform an additional plumbing between a third annulus and each of the previous two annuli to obtain the desired thickened multicurve. The plumbing can be carried out in only two ways, as shown in Figure \ref{fig:k1n3_1}. Regardless of which method is chosen, the resulting thickened multicurve has three boundary components.

    \begin{figure} [H]
        \centering
        \includegraphics[width=1\linewidth]{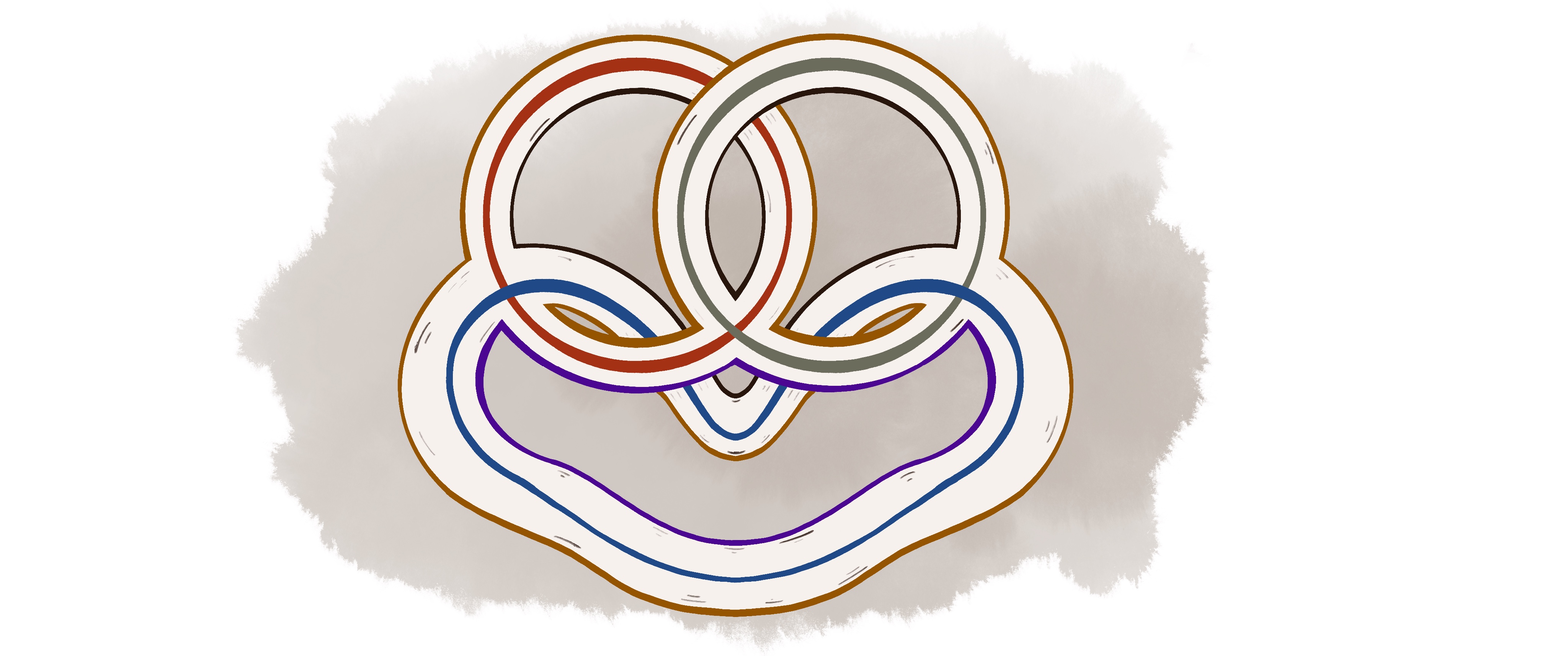}
        \caption{The first case of $k = 1$, $n = 3$.}
        \label{fig:k1n3_1}
    \end{figure}

    \begin{figure} [H]
        \centering
        \includegraphics[width=1\linewidth]{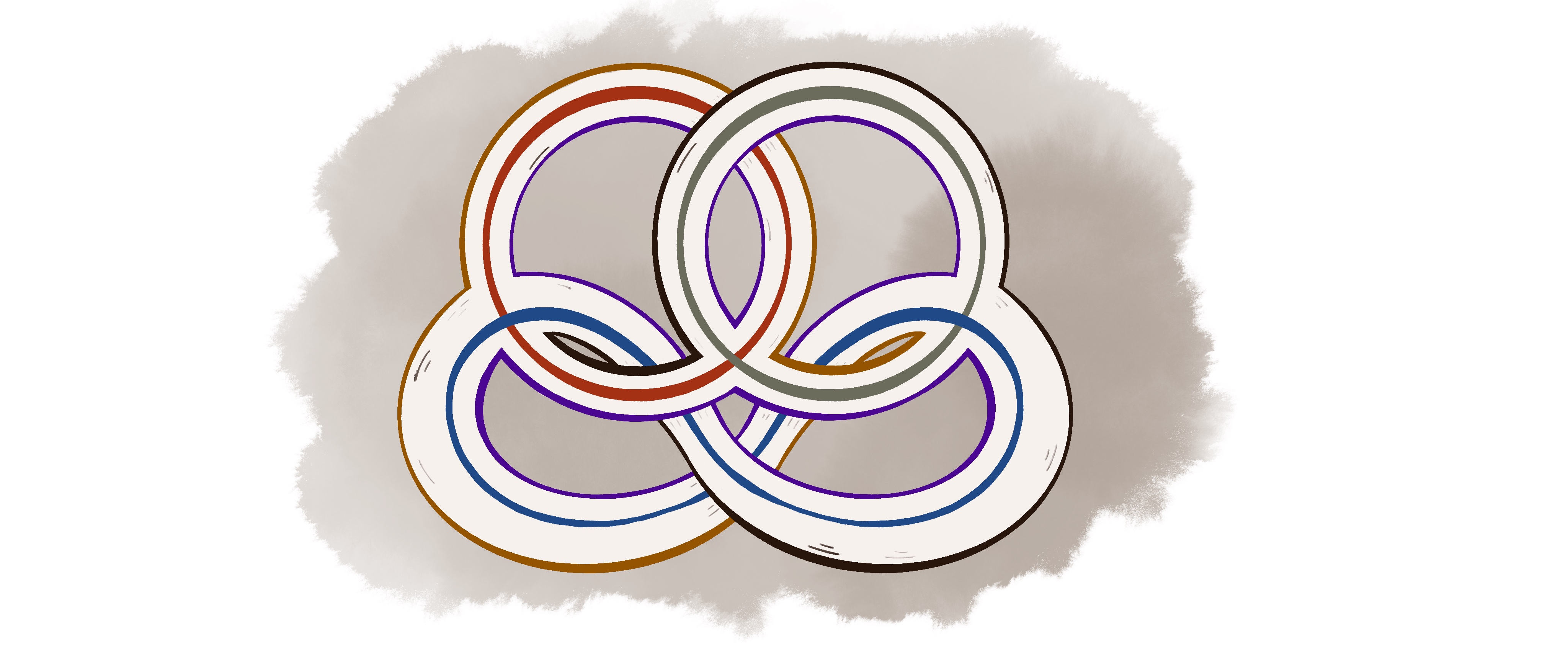}
        \caption{The second case of $k = 1$, $n = 3$.}
        \label{fig:k1n3_2}
    \end{figure}

\end{enumerate}

\section{Proof of \cref{thm:main2}}\label{sec:completion2}

In this section we provide a proof of Theorem \ref{thm:main2}. Note that in the previous two sections we determined only the maximal genus of a surface that admits a filling \(k\)-system with \(n\) curves for given \(n\) and \(k\), but the proof did not give an explicit construction for every possible genus. We fill this gap here. Specifically, we need to show that for every integer $g$ satisfying $g_{k,n-1} < g < g_{k,n}$, there exists a filling $k$-system consisting of exactly $n$ curves on the surface $S_g$. Here $g_{k,n}$ is defined in Theorem \ref{thm:main1}.

\begin{comment}

\subsection{$k$ is even}\label{subsec:k_even}

Similar to Theorem~\ref{theorem:k is even}, we give a construction of a surface of genus $g$, where $g_{k,n-1} < g < g_{k,n}$. Our aim is to construct an attaching graph $G'$ with $n$ vertices such that between any two vertices there are at most $k$ edges; see Section~\ref{subsec:attachinggraph}.  

One method is to attach every curve to the same initial curve using Surgery~I, thereby obtaining a tree with $n$ vertices. Then we perform Surgery~II exactly $g - n + 2$ times, ensuring that for each pair of vertices without an edge the number of connecting edges is at most $k/2$, and for each pair of vertices that already share an edge the number of additional connections is at most $k/2 - 1$. In this way, the total number of edges in $G'$ becomes $g + 1$.

At this stage, the resulting thickened system has exactly two boundary components. Attaching a disk to each boundary component yields a closed surface of genus $g$ that carries a filling $k$-system consisting of $n$ curves.

We only need to prove that the system on $S_g$ is in minimal position, i.e., that no two curves bound a bigon. Consider the tubular neighborhood of any two curves in the thickened system; this defines a subsurface of the thickened system. The boundary of this subsurface contains no bigon, because all orientations of the two curves are coherent. If a bigon existed, the two intersection points corresponding to the bigon would be incoherent, which leads to a contradiction.
\end{comment}

In this subsection, our main goal is to omit a certain number of Reidemeister I and Reidemeister II moves from the construction given in the previous section, while still maintaining a filling \(k\)-system formed by the \(n\) curves. Each omitted Reidemeister I move reduces the genus of the resulting surface by \(1\), and each omitted Reidemeister II move reduces it by \(2\). By omitting a suitable set of such moves, we can lower the genus of the surface and thereby cover every integer genus \(g\) in the range \(g_{k,n-1} < g < g_{k,n}\). More precisely,

\begin{theorem}\label{thm:main3}
    For \(g_{k,n-1} < g < g_{k,n}\), there exists a filling $k$-system with $n$ curves on minimal position.
\end{theorem}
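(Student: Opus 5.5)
We start from the construction of Section~\ref{sec:kodd}. There, a scheme of legal Reidemeister moves on the stairs $St(n,k)$ yields a thickened $k$-system $T$ of $n$ curves with $b\in\{1,2\}$ boundary components, so $S_T=S_{g_{k,n}}$. The stairs itself has all pairwise intersection numbers exactly $k$, and the total number $i=\frac12 kn(n-1)$ of crossings does not change under Reidemeister moves. So by \cref{proposition: genus of surface for TSL} the genus of the associated surface is governed only by the number $b$ of boundary components. By Theorem~\ref{Theorem: tree}, $b$ equals the number of connected components of the attaching graph $G$.

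\textbf{Step 1 (deleting moves).} If we delete a subset of the moves in the scheme, the attaching graph becomes a subgraph $G'\subseteq G$ on the same vertex set. It is still a forest, so every remaining move is still legal. The number of boundary components becomes the number of components of $G'$. Deleting one Reidemeister~I move removes one edge of $G$ and raises $b$ by $2$. Deleting one Reidemeister~II move removes the two edges produced by that move and raises $b$ by $4$. Hence the genus drops by exactly $1$ or $2$ respectively.

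The construction contains at least $g_{k,n}-g_{k,n-1}-1$ Reidemeister~I moves: each carriage operation and each loco operation supplies one or more of them, and their number grows linearly in the size of the stairs. Deleting exactly $g_{k,n}-g$ of them therefore realises every intermediate genus $g$. The count needed is routine: compare $g_{k,n}-g_{k,n-1}\approx \frac{k(n-1)}{2}$ with the number of crosses in the figures. If the Reidemeister~I moves alone do not suffice, we additionally delete Reidemeister~II moves, which lower the genus in steps of $2$. For the special operation when $n=3$, we may simply undo it, which restores $b=3$.

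\textbf{Step 2 (filling).} Filling holds automatically, because $S_T$ is obtained by capping every boundary component of $T$ with a disc. By \cref{proposition: genus of surface for TSL}, the core then fills a surface of genus $(i-b+2)/2$.

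\textbf{Step 3 (it is a $k$-system).} We must check that the $n$ cores are essential, pairwise non-isotopic, and in minimal position, with pairwise intersection number $k$. This is the main obstacle. The curves have not changed as abstract arcs in the stairs; only the local sign and ordering data at crossings has changed. We first try to rule out bigons and monogons between the curves. Any such disc would be a complementary region of $T$, that is, a capped boundary component, whose boundary traverses only two or one sides of the core graph. So we need to show that every boundary component of $T$ has length at least $3$ in the cell decomposition, and that no component passes around a corner formed by only two strands twice.

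For this, note that every vertex of $G'$ corresponds to a region of the original stairs of length at least $4$ (the squares and the outer regions). Merging regions along edges of $G'$ only lengthens them. Hence all faces of the final decomposition have at least $4$ sides, and the bigon criterion gives minimal position.

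Essentiality follows from the same length bound, since a curve bounding a disc would create a monogon-free face of length $1$. Distinct curves are non-isotopic because they intersect $k\ge1$ times while being simple. Since each pair of curves still meets in exactly $k$ crossings, the result is a filling $k$-system of $n$ curves on $S_g$, proving Theorem~\ref{thm:main3}.
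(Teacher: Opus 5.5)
Your overall plan matches the paper's: delete some moves from the scheme of Section~\ref{sec:kodd}, so that the attaching graph loses edges and the genus drops by $1$ per deleted Reidemeister~I move and $2$ per deleted Reidemeister~II move. However, Step~3 has a genuine gap, and it is exactly the point on which the theorem hinges.

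You assume a bigon between two of the curves would be a single complementary region of $T$, and deduce minimal position from a lower bound on face lengths. For $n\ge 3$ this is false. A bigon for a pair $\gamma_a,\gamma_b$ is any embedded disc bounded by one arc of each. It may be crossed by arcs of the other curves, so in general it is a union of several faces.

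Your argument never uses which moves were deleted, so it would apply equally if every move were deleted. But in the undeformed stairs (for $k\ge 2$), the rectangle $B_i$ between two consecutive crossings of $\delta_a$ and $\delta_b$ is a disc in $S_{St(n,k)}$. It is tiled by small squares, crossed by arcs of the remaining curves, and bounded by one arc of $\delta_a$ and one of $\delta_b$. It is a genuine bigon (the paper notes that each pair bounds $k-1$ of them), even though no face is a bigon or a monogon. Your face-length premise is also inaccurate: the small squares at turning corners of the arcs are triangles in the cell structure, since a turning corner is not a vertex.

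The same failure can occur after a partial deletion. If the deleted moves include every move touching the squares of some $B_i$, then $B_i$ survives as a bigon, and nothing in your Step~1 rules this out. Minimal position is part of the definition of a filling system here, so this is not cosmetic.

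What matters is therefore which moves you delete, not just how many. The paper first proves Lemmas~\ref{lem:candidate1} and~\ref{lem:candidate2}: any bigon after the surgeries must be a bigon candidate (a consecutive union of the $B_s$), and a candidate is destroyed as soon as some retained move lies inside it or crosses its boundary. It then deletes only moves from the carriage operations and right loco operations of the four-row blocks, keeps every left loco operation, and checks that every candidate still meets a retained move.

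Your count in Step~1 must accordingly be taken over this restricted set, not over all Reidemeister~I moves. For $n\ge 8$ the restricted set suffices, but for $n\le 7$ it does not. That is why the paper inducts on $k$: a $(k-1)$-system is a $k$-system, so only $\max\{g_{k-1,n},g_{k,n-1}\}<g<g_{k,n}$ remains, a drop of at most $10$, and explicit moves are then chosen for $n=3,\dots,7$.

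A minor point: in the attaching graph a Reidemeister~I move contributes two edges and a Reidemeister~II move four. Removing one edge from a forest adds one component, not two.
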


The key point here is that after omitting some Reidemeister~I and Reidemeister~II moves, the curves must still be in minimal position; that is, no two curves may form a bigon. The method we use is to introduce the concept of a "bigon candidate": a set of small squares in \(St(n,k)\) whose union may form a bigon after performing the Reidemeister moves. In Section~\ref{subsec:bigoncandidate} we prove that any bigon must arise from a union of rectangles, and in Section~\ref{subsec:completion} we give a way to omit Reidemeister moves such that all bigon candidates do not form bigons.

\begin{comment}
\begin{definition}[Bigon candidate]
    For a set of small squares in $St(n,k)$, we call this set a \emph{bigon candidate} if the union of the squares satisfies:
    \begin{itemize}
        \item Its boundary is contained in two of the $n$ closed curves.
        \item After applying the Reidemeister moves, these boundary curves form the boundary of a bigon, and the bigon consists precisely of the squares in this candidate set.
    \end{itemize}
\end{definition}
\end{comment}

\subsection{Bigon candidates}\label{subsec:bigoncandidate}

For the original stair $St(n,k)$ with no Reidemeister moves, every pair of curves intersects \(k\) times, which naturally forms \(k-1\) bigons between them, numbered from bottom to top as \(B_1,\dots,B_{k-1}\). The intersection point between two adjacent bigons \(B_i\) and \(B_{i+1}\) is denoted by \(X_i\). If \(k\) is even, there is an additional bigon, which we denote by \(B_k\), whose boundary is contained in the same two curves and consists of two arcs meeting at the intersection points \(X_k\) and \(X_1\), connecting the top and the bottom of \(St(n,k)\). In other words, all these bigons form a line when \(k\) is odd, and form a cycle when \(k\) is even. %We define the bigon candidate as follows:

\begin{definition}[Bigon candidate]
    A \emph{bigon candidate} is a union of a consecutive subsequence of \(B_1,\dots,B_{k-1}\) (and possibly including \(B_k\) when \(k\) is even), whose boundary consists of arcs from exactly two curves. Moreover, when \(k\) is even, the bigons \(B_1,\dots,B_k\) form a cycle; consequently, the complement of a consecutive subsequence (i.e., the union of \(B_{j+1},\dots,B_k,B_1,\dots,B_i\)) is also a bigon candidate.
    
    %For stairs, a bigon candidate is the union of rectangles whose sides are from the same two curves that connect at their corners. Note that some of the rectangles may contain the outside components of the stair, as shown in the figure.

\end{definition}

%\textbf {To Xiao Chen, please draw examples of bigon candidates.}

     \begin{figure} [H]
        \centering
        \includegraphics[width=1\linewidth]{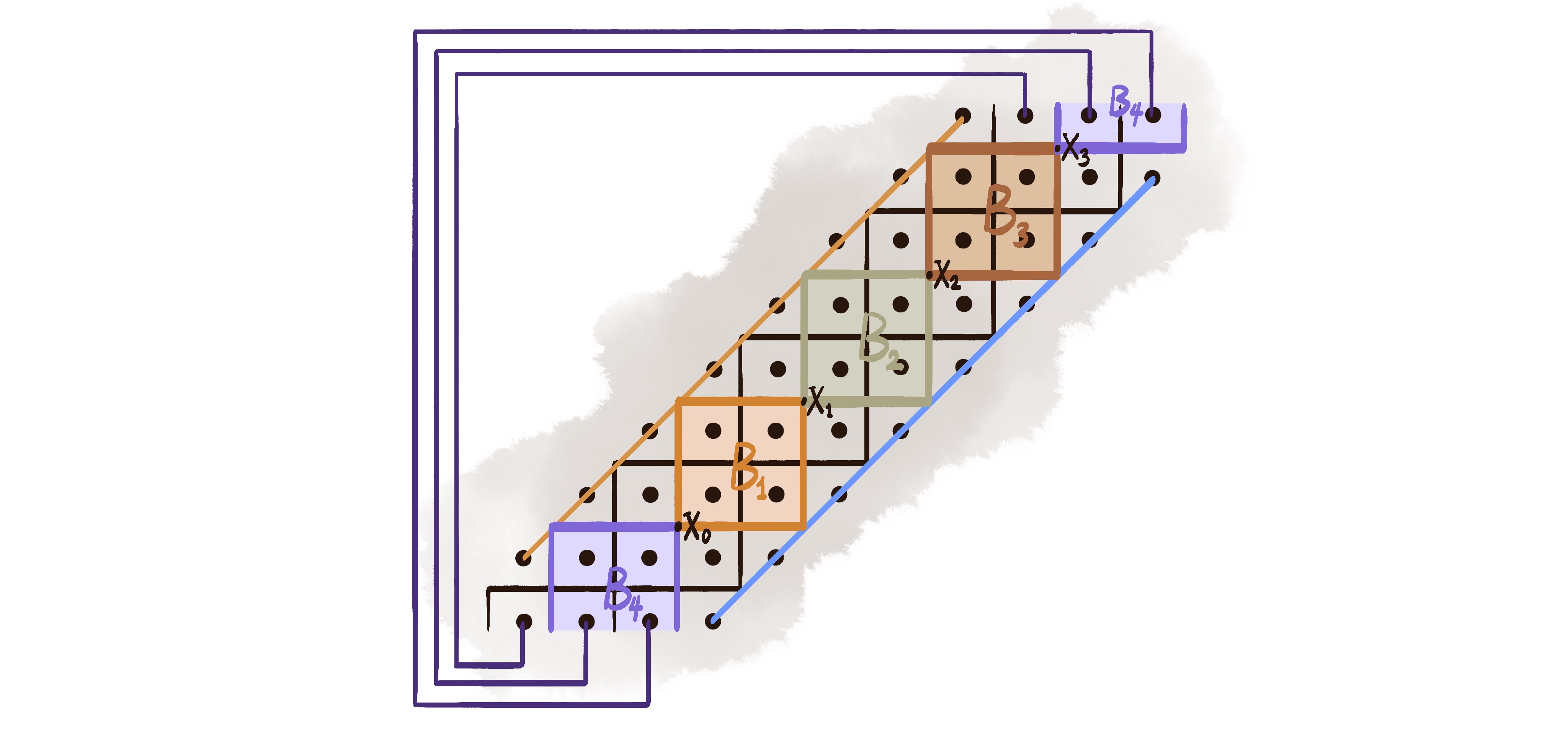}
        \caption{Bigon candidates.}
        \label{fig:BigonCandidates1}
    \end{figure}

The following lemma describes all possible situations in which a bigon could arise:

\begin{comment}
\begin{lemma}
    Let \(\gamma_1,\gamma_2\) be two curves of \(St(n,k)\) that enclose several rectangles on \(St(n,k)\), as indicated by the red rectangles in the figure. After performing a number of Reidemeister~I and Reidemeister~II moves, if for each bigon candidate bounded by the two curves, at least one Reidemeister~I or Reidemeister~II moves have been applied inside these rectangles, or apart from possible Reidemeister~I moves at the crossing points of the rectangles,or Reidemeister~I or Reidemeister~II move crosses the boundaries of the rectangles, then \(\gamma_1\) and \(\gamma_2\) do not form a bigon.
\end{lemma}
\end{comment}

\begin{lemma}\label{lem:candidate1}
    If there is a bigon after the surgeries, it must occur at one of the bigon candidates of the original stair.
\end{lemma}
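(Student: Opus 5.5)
The plan is to exploit the fact that Reidemeister I and II moves never change the curves themselves as abstract arcs through the intersection points. They only change how the ribbon neighbourhood is glued at a crossing, or the order in which two adjacent crossings occur along the curves. Concretely, I will show that for any two curves $\gamma_i,\gamma_j$ of $St(n,k)$, the cyclic sequence of their mutual intersection points along each curve is the same before and after any scheme of legal moves, up to the local transposition in a Reidemeister II move. A bigon between $\gamma_i$ and $\gamma_j$ in the associated surface $S_T$ is an embedded disk bounded by one arc of $\gamma_i$ and one arc of $\gamma_j$, meeting at two intersection points $X_a,X_b$ of these two curves. Since $L$ fills $S_T$, this disk is a union of faces of the cell decomposition from \cref{proposition: genus of surface for TSL}, and its two boundary arcs are subarcs of $\gamma_i$ and $\gamma_j$ joining $X_a$ to $X_b$.

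First I would record the combinatorics of a pair of curves in the stairs. Following $\gamma_i$ and $\gamma_j$, their $k$ intersection points appear in the order $X_1,\dots,X_k$ along both curves. In the cyclic case, $k$ even, this order is cyclic because of the gluing $A_i\sim A_i'$, $B_i\sim B_i'$. Consequently, any pair of arcs, one on $\gamma_i$ and one on $\gamma_j$, with common endpoints and no other $\gamma_i\cap\gamma_j$ points in their interiors, must run between the same two points $X_a,X_b$. Each arc covers a consecutive run of the $X$'s. By definition the region between them is the union of $B_a,\dots,B_{b-1}$, or for $k$ even the complementary run through $B_k$. The arcs may also pass through intersections with other curves $\gamma_l$; those cut the bigon into faces but do not affect which pair of $\gamma_i\cap\gamma_j$ points bounds it.

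Second, I would check that the moves preserve this order data. A Reidemeister I move at a crossing only flips the half-twist of the ribbon there, so the cyclic order of intersection points along every curve is unchanged. A Reidemeister II move swaps two adjacent crossings. If both crossings lie on $\gamma_i\cap\gamma_j$, the swap exchanges two consecutive $X$'s along both curves simultaneously, so consecutive runs remain consecutive runs of the same underlying set. Otherwise at most one of the swapped points lies on $\gamma_i\cap\gamma_j$, and the relative order of the $X$'s along $\gamma_i$ and $\gamma_j$ is unchanged. Hence the pair of boundary arcs of any bigon after the surgeries consists of arcs whose endpoints and $X$-content are those of some consecutive subsequence of $B_1,\dots,B_{k-1}$, or of a cyclic one including $B_k$. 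That is precisely a bigon candidate, which proves \cref{lem:candidate1}.

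I expect the main obstacle to be making the identification of an abstract bigon in $S_T$ with a union of candidate regions rigorous, since after the moves the faces are no longer the original small squares. To handle this, I would phrase the argument purely in terms of the pair of boundary arcs, that is, endpoints and order along the curves, rather than in terms of the faces. I would then treat the Reidemeister II case in which both swapped points belong to $\gamma_i\cap\gamma_j$ separately and carefully.
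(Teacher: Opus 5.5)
There is a genuine gap in the last step. The invariant you track is correct. A Reidemeister~I move does not change any curve as a cyclic sequence of crossings. A Reidemeister~II move only transposes two crossings of one curve with two \emph{different} curves. So the cyclic order of $\gamma_i\cap\gamma_j$ along $\gamma_i$ and along $\gamma_j$ is preserved.

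The problem is the conclusion ``that is precisely a bigon candidate''. Whether two arcs with common endpoints bound a disk is decided by the ribbon structure, which is exactly what the moves change, and not by the order of the $X$'s. Your plan to phrase everything in terms of endpoints and $X$-content therefore cannot finish the proof. It fails concretely in two places.
\begin{enumerate}
\item Each $\gamma_i$ is a closed curve, so the $X$'s are cyclically ordered along it for every $k$, odd or even. There is no $B_k$ for odd $k$ because of how the band ends are glued: for odd $k$ the outer part has one or two components instead of $n-1$ top--bottom strips. It is not because the order is linear. So nothing in your argument excludes a bigon after the surgeries bounded by the two arcs from $X_k$ to $X_1$ through the glued ends, and this is not a candidate when $k$ is odd.
\item You assume both arcs cover the same run of $X$'s. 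Take instead an arc of $\gamma_i$ going one way around and an arc of $\gamma_j$ going the other way; for $k=2$, the $\gamma_i$-side of $B_1$ with the $\gamma_j$-side of $B_2$. Such a pair meets only at its endpoints, is compatible with the same order data, and bounds no candidate.
\end{enumerate}
There is also an internal inconsistency: if you require no $X$'s in the interiors of the arcs, only single $B_a$'s can occur. The unions you describe then never arise from your own hypothesis.

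The missing idea is the one the paper's proof rests on. Every legal move joins distinct boundary components and never creates a new one. Hence each complementary region after the surgeries is obtained by joining regions (small squares and outer pieces) of the original stair. A bigon after the surgeries is therefore a union of original regions bounded by an arc of $\gamma_i$ and an arc of $\gamma_j$ between two of their intersection points. Moreover, no Reidemeister~I move can have been performed at either corner, since the merged region would then occupy two opposite sectors there. In the original stair such unions are exactly consecutive runs of the $B_s$, cyclic only when $k$ is even, which is the lemma. Your order bookkeeping can at best supplement this region argument; it cannot replace it.
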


\begin{proof}
      If we perform a Reidemeister~I move at a crossing point \(X_i\) (marked by a blue cross in the figure), the two adjacent bigons \(B_i\) and \(B_{i+1}\) merge into a single bigon.

    % \begin{figure} [H]
    %     \centering
    %     \includegraphics[width=1\linewidth]{Pictures/ReidemeisterImove.jpg}
    %     \caption{Reidemeister I move.}
    %     \label{fig:ReidemeisterImove}
    % \end{figure}

 Suppose that after performing all Reidemeister~I and Reidemeister~II moves, there exists a bigon such that two curves intersect at \(X_i\) and \(X_j\). Then no Reidemeister~I move has been performed at \(X_i\) or \(X_j\). Observe that if in every Reidemeister move we band distinct boundary components (which is true in our case), we never create new boundary components. Therefore this bigon must be one of our bigon candidates.
\end{proof}

\begin{lemma}\label{lem:candidate2}
    If there is a bigon after the surgeries, it must occur at one of the bigon candidates of the original stair. Moreover, given a bigon candidate, if at least one Reidemeister~I or Reidemeister~II move is applied inside these rectangles, or the crossing of a Reidemeister~I move is at the corner of one rectangle other than the intersections, or a Reidemeister~I or Reidemeister~II move crosses the boundaries of the rectangles after the surgeries, then the bigon candidate is not a bigon.
\end{lemma}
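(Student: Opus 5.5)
The first assertion is exactly Lemma~\ref{lem:candidate1}, so I will cite it and concentrate on the second assertion. The guiding principle is that a bigon between $\gamma_1,\gamma_2$ in the final thickened multicurve is an embedded disk in $S_T$ whose interior misses the whole multicurve, and is bounded by one arc of $\gamma_1$ and one arc of $\gamma_2$ meeting at two intersection points. In particular, such a disk must be a \emph{single} complementary region, hence correspond to a single boundary component of $T$. Its boundary word, read around that component, must then consist only of two arcs, one on each curve. The task is to show that each listed situation destroys one of these two properties for the given bigon candidate.

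I would argue case by case, always tracking the boundary components through the attaching graph and the descriptions of the moves in Definitions~\ref{def:Reidemeister1} and~\ref{def:Reidemeister2}.

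\emph{A move performed inside the candidate.} Suppose a Reidemeister~I or II move is applied at an intersection point interior to the union of rectangles. Such a point is a crossing of some third curve $\gamma_m$ with $\gamma_1$ or $\gamma_2$, or with another curve, lying strictly inside the candidate. After the move, the small squares adjacent to that point are joined to components on the other side of the crossing. This joining is diagonal for R-I, and across the three-square strip for R-II. Either the candidate's region is then merged with squares outside it, so its boundary word contains a third curve, or the move has changed the local orientation and a strand of $\gamma_m$ passes through the would-be disk. In both cases the region bounded by the relevant arcs of $\gamma_1,\gamma_2$ is no longer a single face disjoint from the other curves.

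\emph{A move at a corner, or crossing the boundary.} If an R-I move sits at a corner of a rectangle other than the two endpoint intersections $X_i,X_j$, then the boundary arc of $\gamma_1$ or $\gamma_2$ at that corner is twisted. The face adjacent to the candidate is joined diagonally to a square outside it, so the boundary of the component containing the candidate turns onto a different curve. If an R-I or R-II move crosses the candidate's boundary, then by Definition~\ref{def:Reidemeister2} the three squares on one side, some inside and some outside, become one component. Either way the component containing the candidate contains an outside square, so its boundary is not made of just the two arcs, and no bigon is formed. I would also invoke the fact used in Lemma~\ref{lem:candidate1}: every move joins distinct components and never splits one. This makes the merging irreversible under later moves, so the order of the moves does not matter.

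The main obstacle is making \emph{inside the rectangles} and \emph{crossing the boundary} precise enough to give a clean local picture, especially for even $k$. There, candidates can wrap through $B_k$ and contain outer components of the stairs, so the union of rectangles is not a planar disk in the figure. I would handle this first by passing to the subsurface spanned by $\gamma_1\cup\gamma_2$, as in the commented remark on coherent orientations. The bigon is then witnessed by two intersection points of opposite sign, and any of the listed moves either flips the sign at a crossing in the chain or inserts a third curve. I expect the even-$k$ cyclic candidates to require this orientation argument rather than the pictorial one.
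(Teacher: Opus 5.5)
Your proof has a genuine gap: you work with the wrong notion of bigon, and the argument does not survive the correction.

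\textbf{The notion of bigon.} You take a bigon to be a single complementary face of the whole multicurve, bounded by two arcs. But $\gamma_1,\gamma_2$ fail to be in minimal position as soon as an arc of $\gamma_1$ and an arc of $\gamma_2$ cobound an embedded disk, whether or not other curves of the system cross that disk. In $St(n,k)$ with $n\ge 3$, every rectangle $B_s$ is crossed by the remaining curves and is a union of several small squares. So no bigon candidate is ever a single face. An untouched candidate is nevertheless a genuine bigon, even though your criterion would not count it.

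Consequently, conclusions such as ``its boundary word contains a third curve'' or ``a strand of $\gamma_m$ passes through the would-be disk'' prove nothing.

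\textbf{The inside case.} This is where the error is fatal. At a crossing strictly inside the candidate, all surrounding squares lie inside. So your first alternative (the move itself merging the candidate with outside squares) does not occur, and your second alternative is harmless.

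The missing idea, which is what the paper uses, is topological. A legal move strictly inside bands distinct faces that all lie in the region cut off by the two arcs. This lowers that region's Euler characteristic (the paper says its genus increases), so the arcs no longer bound a disk. A move across the boundary instead joins a face inside the candidate to one outside, so the two arcs no longer cut off the candidate as a closed region. Your boundary-crossing observation is the right one, but it must be turned into this statement rather than into ``not a single face.''

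\textbf{The even-$k$ sign argument.} It has the same defect. Opposite signs at the two corners is only a necessary condition for a bigon, and ``inserting a third curve'' is no obstruction.

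\textbf{The corner case is reversed.} You declare fatal any Reidemeister~I move at a corner other than the endpoints $X_i,X_j$, arguing that the adjacent face is joined diagonally to an outside square. But an intermediate corner $X_s$, $i<s<j$, is a crossing of $\gamma_1$ with $\gamma_2$. Its two diagonal quadrants are the corners of $B_s$ and $B_{s+1}$, both inside the candidate. A move there merges $B_s$ and $B_{s+1}$, and this is exactly how a union of consecutive $B_s$ becomes a bigon. This is the content of Lemma~\ref{lem:candidate1}, and it is configuration $F$, which the paper explicitly allows.

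The corner moves that destroy the candidate are those at the endpoints $X_i$ and $X_j$. There the diagonal of $B_{i+1}$ is $B_i$ (resp.\ the diagonal of $B_j$ is $B_{j+1}$), which lies outside the candidate.
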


\begin{proof}

%定义一个“手术痕迹”，在bigon candidate（最终形成bigon的小方块的并集）里的，在Lemma 5.1之前。避免引入手术顺序的概念，做的手术是原有手术的子集。

   We observe that no Reidemeister move can cross the boundary between the interior of \(\bigcup_{s=i+1}^{j} B_s\) and the exterior (as configurations \(A\) and \(B\) in Figure \ref{fig:BigonCandidates2}), because such a move would connect the interior of \(\bigcup_{s=i+1}^{j} B_s\) with its exterior into one component, and a bigon must be a closed region.

 Secondly, there cannot exist a Reidemeister~II move that connects two sets of three squares lying respectively inside and outside \(\bigcup_{s=i+1}^{j} B_s\) (as in configuration \(C\) in Figure \ref{fig:BigonCandidates2}), nor can there be any Reidemeister move entirely inside \(\bigcup_{s=i+1}^{j} B_s\) (as in configurations \(D\) and \(E\) in Figure \ref{fig:BigonCandidates2}). This is because such a move would band distinct boundary components and increase the genus inside \(\bigcup_{s=i+1}^{j} B_s\); consequently, after all Reidemeister moves have been performed, the region cannot remain a bigon. 
 
    \begin{figure} [H]
        \centering
        \includegraphics[width=1\linewidth]{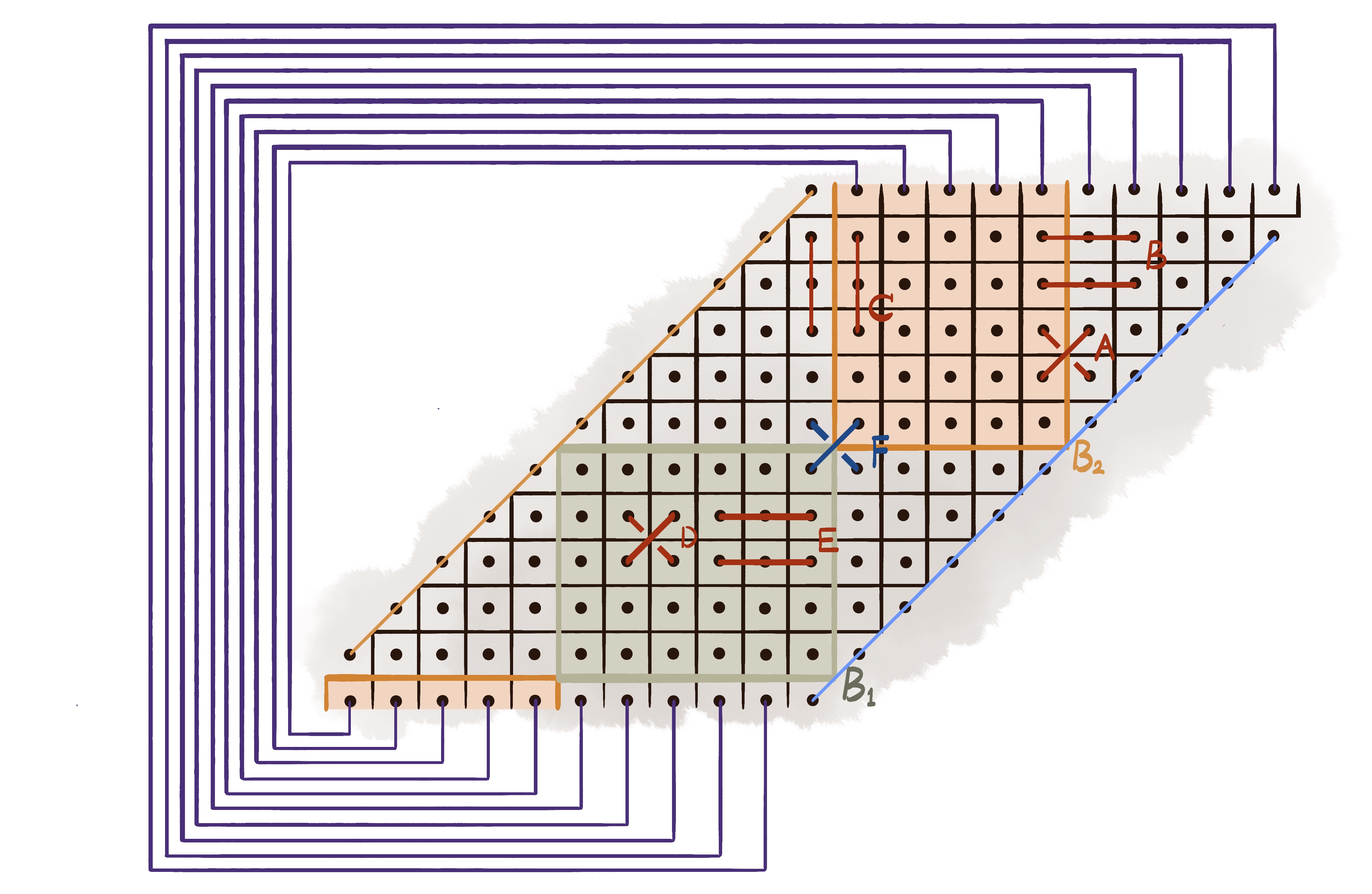}
        \caption{Bigon candidates and configurations $A,B,C,D,E,F$.}
        \label{fig:BigonCandidates2}
    \end{figure}
    
Therefore, in the final configuration no Reidemeister move crosses the boundaries of the rectangles except possibly Reidemeister~I moves applied at the intersection points \(X_{i+1},\dots,X_{j-1}\) (as in configuration \(F\) in Figure \ref{fig:BigonCandidates2}).

\end{proof}

\subsection{Proof completion}\label{subsec:completion}

Next, based on the construction given in Section~\ref{sec:kodd} and Section~\ref{subsec:bigoncandidate}, we remove some Reidemeister moves that satisfy the conditions stated in Lemma~\ref{lem:candidate2}, in order to prove Theorem~\ref{thm:main3}, which ensures that a corresponding construction exists for every possible genus.

The proof is divided into two parts, depending on whether the number of curves \(n\) is large or small. When \(n\) is large, the case is easy, since we only need to omit some Reidemeister moves in certain loco operations and carriage operations; the number of Reidemeister moves is large enough that the possible reduction in genus exceeds \(g_{k,n} - g_{k,n-1}\). When \(n\) is small, the number of Reidemeister moves is insufficient to achieve the required genus reduction, and a more detailed case analysis is needed. 

In the second part, we employ an inductive argument on \(k\). Observe that a \((k-1)\)-system is naturally also a \(k\)-system. We have already shown that for any genus \(g \le g_{k-1,n}\), there exists a filling \((k-1)\)-system consisting of at most \(n\) curves on \(S_g\). Hence it suffices to consider the range \(g_{k-1,n} < g < g_{k,n}\). Since the difference \(g_{k,n} - g_{k-1,n}\) is bounded above by \(k\), only a limited reduction in genus is required, which can be achieved by omitting a small number of moves. This is feasible because \(k\) itself is small.

%Before proving the theorem, to simplify the argument, we first give a lemma, which states that 

\begin{proof}[Proof of Theorem \ref{thm:main3}]
    In the case $k = 1$, there are no bigon candidates; thus omitting any subset of Reidemeister moves yields a system with no bigons. Hence we only need to consider $k \geq 2$. We divide the proof into two cases based on the number $n$ of curves. The first is $n\geq8$, and the second is $3\leq n\leq 7$.

\begin{itemize}
    \item  When $n\geq8$, according to the construction in Section~\ref{sec:kodd} and Section~\ref{subsec:bigoncandidate}, if \(k\) is odd, the stairs \(St(n,k)\) consist of \(\frac{k-1}{2}\) basic blocks, and each basic block contains at least \(\lceil\frac{n-3}{4}\rceil\) four-row blocks in which the basic construction is performed; if \(k\) is even, the stairs \(St(n,k)\) consist of \(\frac{k}{2}\) basic blocks. In each four-row block, we may omit a subset of the Reidemeister moves from the middle carriage operation and the right loco operation, while keeping the left loco operation unchanged. Note that if we omit a Reidemeister~I move, the genus decreases by \(1\); if we omit a Reidemeister~II move, the genus decreases by \(2\). In this case, when $n\geq8$ and $k\geq 2$, the maximal possible reduction in genus is no less than
    \begin{equation}
        \frac{k-1}{2}\cdot \lceil\frac{n-3}{4}\rceil\cdot (5+2(n-6))\geq \frac{1}{2}k(n-1)\geq g_{k,n}-g_{k,n-1}-1.
    \end{equation}

    We prove that omitting any subset of these Reidemeister moves still yields a multicurve with no bigons. By Lemma~\ref{lem:candidate1}, any bigon must arise from some bigon candidate, that is, a single rectangle or a union of rectangles whose sides belong to the same two curves and meet at their corners. A single rectangle cannot occur, because all small squares at the upper-left corners in the stairs participate in Reidemeister moves.

        \begin{figure} [H]
        \centering
        \includegraphics[width=1\linewidth]{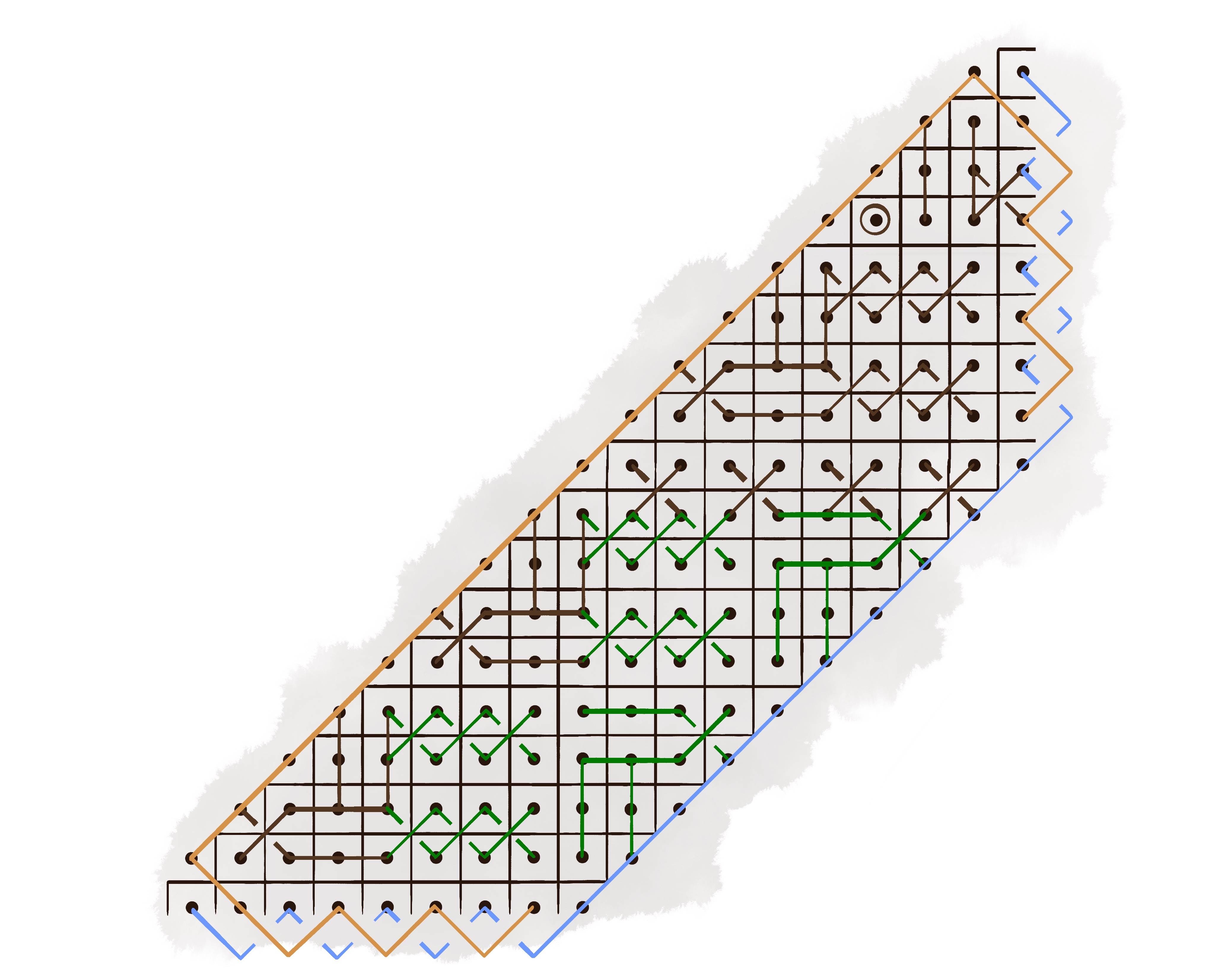}
        \caption{Omitting Reidemeister moves when $k=3$ and $n=9$. The Reidemeister moves that can be omitted are in the right loco operations and the carriage operations in the four-row blocks, which are labelled in green.}
        \label{fig:ReidemeisterMovesOmitting}
    \end{figure}

    It remains to show that a union of rectangles is also impossible. First assume that $k$ is odd. Suppose the union is \(\bigcup_{s=i+1}^{j} B_s\) with crossing points \(X_{i+1},\dots,X_{j-1}\). Then, as shown in Lemma~\ref{lem:candidate2}, no Reidemeister move crosses the vertical and horizontal lines passing through \(X_{i+1}\) except possibly a Reidemeister~I move at \(X_{i+1}\) itself. In this situation, the horizontal line must be the bottom line of some left-hand loco operation, and the vertical line must be the right line of some left-hand loco operation. Consequently, some Reidemeister move would have to cross the indicated line inside the rectangle \(B_{i+2}\), which is impossible. Hence such a configuration cannot exist. If $k$ is even, although all the bigons form a cycle,  the same proof strategy applies.
    
    \item  When $n \leq 7$, we make use of the following induction argument: for any $g \leq g_{k-1,n}$, there exists a filling $(k-1)$-system consisting of at most $n$ curves on $S_g$. A $(k-1)$-system is also naturally a $k$-system, so it suffices to consider the range $\max\{g_{k-1,n}+1,g_{k,n-1}+1\} \leq g \leq g_{k,n}-1$. The number $g_{k,n}-g_{k-1,n}-1$ equals $10,7,4,2,1$ when $n=7,6,5,4,3$. Note that when $n=7,6,5,4$, the construction is the same regardless of the parity of $k$; it differs only when $n=3$.

    When \(n = 7\), we may omit the two Reidemeister~I moves in the carriage operation, the Reidemeister moves in the right loco operation within the basic construction in the bottom four rows, and the three rightmost Reidemeister~I moves at the top of the bottom Construction~II. This reduces the genus by \(10\). If \(1 \leq g_{k,n} - g \leq 9\), we omit a suitable subset of the Reidemeister moves listed above.

    When \(n = 6\), we may omit the Reidemeister moves in the right loco operation within the basic construction in the bottom four rows, together with a Reidemeister~II move above it. This reduces the genus by \(7\). If \(1 \leq g_{k,n} - g \leq 6\), we omit a suitable subset of the Reidemeister moves listed above.

    When \(n = 5\), we may omit the unique Reidemeister~I move at the top and two Reidemeister~II moves  below it, specifically the two on the right-hand side in two different two-row units. This reduces the genus by \(5\). If \(1 \leq g_{k,n} - g \leq 4\), we omit a suitable subset of the Reidemeister moves listed above. Note that we must keep the unique Reidemeister~I move, because omitting only Reidemeister~II moves reduces the genus by an even number, which cannot achieve a reduction of \(1\) or \(3\).

    When \(n = 4\) and \(g_{k,n} - g = 2\), we omit the bottommost Reidemeister~II move; when \(g_{k,n} - g = 1\), we omit the leftmost Reidemeister~I move. 
    
    When \(n = 3\) and \(g_{k,n} - g = 1\), we reduce the genus by 1 by omitting the special operation when $k$ is odd and replacing the bottom Reidemeister~II move with a Reidemeister~I move when $k$ is even.
\end{itemize}
    
\end{proof}

%==============================%
%\newpage

% \bibliographystyle{acm}
% \bibliography{reference.bib}
\printbibliography[title={References}] %打印参考文献

%\end{multicols}

\end{document}